\newtheorem{theorem}{Theorem}
\newtheorem{proposition}{Proposition}
\newtheorem{lemma}{Lemma}
\theoremstyle{definition}
\newtheorem{example}{Example}
\theoremstyle{remark}
\newtheorem{remark}{Remark}
\begin{document}
\title[On general prime number theorems with remainder]{On general prime number theorems with remainder}

\author[G. Debruyne]{Gregory Debruyne}
\address{G. Debruyne\\ Department of Mathematics\\ Ghent University\\ Krijgslaan 281\\ B 9000 Gent\\ Belgium}
\thanks{G. Debruyne gratefully acknowledges support by Ghent University, through a BOF Ph.D. grant}
\email{gregory.debruyne@UGent.be}
\author[J. Vindas]{Jasson Vindas} 
\thanks{The work of J. Vindas was supported by the Research Foundation--Flanders, through the FWO-grant number 1520515N}
\address{J. Vindas\\ Department of Mathematics\\ Ghent University\\ Krijgslaan 281\\ B 9000 Gent\\ Belgium}
\email{jasson.vindas@UGent.be}

\subjclass[2010]{11N80, 11M45.}
\keywords{The prime number theorem;  zeta functions; Tauberian theorems for Laplace transforms; Beurling generalized primes; Beurling generalized integers}

\begin{abstract}
We show that for Beurling generalized numbers the prime number theorem in remainder form
$$\pi(x) = \operatorname*{Li}(x) + O\left(\frac{x}{\log^{n}x}\right) \quad \mbox{for all } n\in\mathbb{N}$$
is equivalent to (for some $a>0$)
$$N(x) = ax + O\left(\frac{x}{\log^{n}x}\right) \quad \mbox{for all } n \in \mathbb{N},$$ 
where $N$ and $\pi$  are the counting functions of the generalized integers and primes, respectively.
This was already considered by Nyman (Acta Math. 81 (1949), 299--307), but his article on the subject contains some mistakes. We also obtain an average version of this prime number theorem with remainders in the Ces\`{a}ro sense.
\end{abstract}

\maketitle

\section{Introduction}

Since the prime number theorem (PNT) was proved in 1896, independently by Hadamard and de la Vall\'ee-Poussin, mathematicians have wondered which conditions on the primes (and the integers) were really necessary to prove this kind of theorems. For this reason, Beurling introduced in \cite{beurling} the idea of generalized prime numbers. A real sequence $\{ p_{k}\}_{k \in \mathbb{N}}$ is said to be a (Beurling) generalized prime number system if it merely satisfies
\[
 1 < p_{1} \leq p_{2} \leq \dots \leq p_{k} \rightarrow \infty.
\]

The set of generalized integers \cite{bateman-diamond,beurling} is the semi-group generated by the generalized primes. We arrange the generalized integers in a non-decreasing sequence
\[
 1=n_{0}< n_{1} \leq n_{2} \leq \dots \leq n_{k} \rightarrow \infty,
\]
where one takes multiplicities into account. The central objects here are the counting functions of the generalized primes and integers, denoted as
\begin{equation}
\label{defcounting}
 \pi(x) = \sum_{p_{k}\leq x} 1 \quad \mbox{ and } \quad N(x) = \sum_{n_{k}\leq x} 1.
\end{equation}
A typical question in this setting is to determine conditions on $N$, as mild as possible, such that the PNT still holds. This question for the PNT in the form 
\begin{equation}
\label{eqPNTusual}
\pi(x)\sim \frac{x}{\log x}
\end{equation}
 has been studied quite intensively, starting with the seminal work of Beurling \cite{beurling}. We refer to \cite{bateman-diamond, beurling, kahane1, s-v, zhang2015} for results in this direction. 

In this article we are interested in stronger PNT versions than $(\ref{eqPNTusual})$ for Beurling generalized primes. Our aim is to study the PNT with remainder
\begin{equation}
\label{eqpntremainder}
 \pi(x) = \operatorname*{Li}(x) + O_{n}\left( \frac{x}{\log^{n} x}\right), \ \ \ \text{for all } n \in \mathbb{N}\: ,
\end{equation}
where $\operatorname*{Li}$ stands for the logarithmic integral. Naturally (\ref{eqpntremainder}) is equivalent to the asymptotic expansion
$$
\pi(x)\sim \frac{x}{\log x}\sum_{n=0}^{\infty}\frac{n!}{\log^{n}x}.
$$
The following theorem will be shown:

\begin{theorem} \label{thmain} The PNT with remainder (\ref{eqpntremainder}) holds if and only if the generalized integer counting function $N$ satisfies (for some $a>0$)
\begin{equation} \label{eqiremainderN}
 N(x) = ax + O_n\left(\frac{x}{\log^{n}x} \right), \ \ \ \text{for all } n \in \mathbb{N}.
\end{equation}   

\end{theorem}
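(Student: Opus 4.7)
The plan is to pass through the Beurling zeta function $\zeta(s) = \int_{1^{-}}^{\infty} x^{-s}\,dN(x)$ (convergent on $\operatorname{Re} s > 1$) and to apply a Tauberian theorem tailored to remainders of the form $O(x/\log^{n} x)$ for every $n$. The associated Chebyshev function $\psi(x) = \sum_{p_{k}^{j}\le x}\log p_{k}$ has Mellin--Stieltjes transform $-\zeta'(s)/\zeta(s)$, and is linked to $\pi$ by a partial-summation argument whose contribution from prime powers is of much smaller order than $x/\log^{n} x$, so that the asymptotic for $\pi$ is equivalent to the corresponding one for $\psi$.

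The first step is to translate both asymptotic hypotheses into regularity statements on the line $\operatorname{Re} s = 1$. From
$$\zeta(s) - \frac{a}{s-1} = s\int_{1}^{\infty} \frac{N(x)-ax}{x^{s+1}}\,dx,$$
the hypothesis $N(x)-ax = O(x/\log^{n} x)$ for all $n$ forces the right-hand side to extend in a $C^{\infty}$ manner up to $\operatorname{Re} s = 1$: differentiating under the integral brings down powers of $\log x$ which are absorbed by the decay of $N(x)-ax$. Symmetrically, $\pi(x)=\operatorname*{Li}(x)+O(x/\log^{n} x)$ for all $n$ translates, after the reduction to $\psi$, into $C^{\infty}$ boundary smoothness of $-\zeta'(s)/\zeta(s) - 1/(s-1)$ on the same line.

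Next I would establish that $\zeta(s)\ne 0$ on $\operatorname{Re} s = 1\setminus\{1\}$ under either hypothesis. Combined with the boundary smoothness above, this is what permits moving between the multiplicative side ($\zeta$, $\log\zeta$) and the additive side ($\zeta'/\zeta$) without loss of regularity. The classical inequality $3+4\cos\theta+\cos 2\theta \ge 0$, together with the boundary continuity of $\zeta(s)-a/(s-1)$ just obtained, is the natural tool to rule out boundary zeros. Once non-vanishing is in force, exponentiation and logarithmic differentiation show that the two $C^{\infty}$ boundary statements for $\zeta - a/(s-1)$ and for $-\zeta'/\zeta - 1/(s-1)$ are equivalent.

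The final and decisive step is a Tauberian theorem of Ingham--Korevaar type: $C^{\infty}$ boundary behavior of the Mellin transform of a non-decreasing density is converted into a remainder of the form $O(x/\log^{n} x)$ for every $n$. Applied once to $\psi$ and once to $N$, it closes the equivalence in both directions. The principal obstacle is exactly this Tauberian step, because standard Wiener--Ikehara only supplies an $o(x)$ error, and one really needs a sharper complex-analytic Tauberian theorem calibrated to $C^{\infty}$ boundary data. The secondary technical points---rigorous non-vanishing of $\zeta$ on $\operatorname{Re} s = 1$ from the very weak hypothesis $N(x)=ax+O(x/\log x)$, and the careful bookkeeping of the prime-power term that connects $\pi$ to $\psi$ at the required precision---are precisely the places where Nyman's 1949 treatment is reported to go astray and where the bulk of the care must be spent.
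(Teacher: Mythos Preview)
Your overall architecture---pass through $\zeta$, establish boundary regularity on $\Re e\,s=1$, rule out zeros via the $3$-$4$-$1$ inequality, and then invoke a Tauberian theorem---is exactly the route the paper takes. But there is a genuine gap in how you describe the zeta-function condition and the Tauberian step.

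You repeatedly say that the relevant condition is a \emph{$C^{\infty}$-extension} of $G(s)=\zeta(s)-a/(s-1)$ (or of $-\zeta'/\zeta-1/(s-1)$) to the line $\Re e\,s=1$, and that the Tauberian theorem should be ``calibrated to $C^{\infty}$ boundary data.'' That is not enough. The paper's Tauberian theorem (Theorem~\ref{thtaub1}) requires the quantitative growth hypothesis
\[
G^{(n)}(1+it)=O(|t|^{\beta_n}) \quad\text{with}\quad \beta_n/n\to 0,
\]
and the Abelian converse (Proposition~\ref{conversetauberian}) shows that the asymptotic $N(x)=ax+O(x/\log^{n}x)$ actually delivers the much stronger $G^{(n)}(1+it)=O(|t|^{\varepsilon})$ for every $\varepsilon>0$. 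This uniform smallness of the growth exponents is what drives the Tauberian argument: in the proof one gets $\Delta(h)=O(h^{-n/(\beta_n+1)})$, and only $\beta_n/n\to 0$ turns this into $O(h^{-m})$ for all $m$. Mere $C^{\infty}$ (even $C^{\infty}$ with each derivative polynomially bounded by its own exponent) is strictly weaker: Section~\ref{Section Cesaro PNT with remainder} shows that this weaker condition characterizes the PNT with remainder only in the Ces\`aro sense, and Example~\ref{exNyman3} exhibits systems with $G(1+it)\in\mathcal{O}_M$ but $G(1+it)\notin\mathcal{O}_C$. So your Tauberian step, as stated, would not close. Likewise, to pass from regularity of $G$ to regularity of $\log((s-1)\zeta(s))$ one needs not just non-vanishing but a quantitative lower bound $1/\zeta(1+it)=O(|t|^{\varepsilon})$ (Lemma~\ref{leminversezeta}), which again requires the growth control on $G'$ and not just smoothness.

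A minor point: your diagnosis of Nyman's error is off. The paper explains (see Examples~\ref{example1Nyman} and~\ref{example2Nyman}) that Nyman's mistake was that his zeta condition omits information about $s=1$ and is therefore not equivalent to either (\ref{eqpntremainder}) or (\ref{eqiremainderN}); it is not a matter of prime-power bookkeeping or of the non-vanishing argument.
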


Nyman has already stated Theorem \ref{thmain} in \cite{nyman}, but his proof contained some mistakes \cite{inghamreview}. It is not true that his condition \cite[statement~(B), p.~300]{nyman}, in terms of the zeta function 
\begin{equation}
\label{defzeta}
 \zeta(s) = \sum_{k=0}^{\infty} n_{k}^{-s} = \int^{\infty}_{1^{-}} x^{-s} \mathrm{d}N(x),
\end{equation}
is equivalent to either (\ref{eqpntremainder}) or (\ref{eqiremainderN}) (see Examples \ref{example1Nyman}--\ref{example3Nyman} below) and his proof has several gaps.

We will show a slightly more general version of Theorem \ref{thmain} in Section \ref{Section Nyman PNT} which also applies to non-discrete generalized number systems (cf. Section \ref{preli}). For it, we first obtain a complex Tauberian remainder theorem in Section \ref{section Tauberian theorems}, and we then give a precise translation of (\ref{eqpntremainder}) and (\ref{eqiremainderN}) into properties of the zeta function. In Section \ref{Section Cesaro PNT with remainder} we provide a variant of Theorem \ref{thmain} in terms of Ces\`{a}ro-Riesz means of the remainders in the asymptotic formulas  (\ref{eqpntremainder}) and (\ref{eqiremainderN}).

\section{Preliminaries and Notation} \label{preli}

\subsection{Beurling Generalized Number Systems}
We shall consider an even broader definition of generalized numbers \cite{beurling}, which includes the case of non-necessarily discrete number systems. 

A \emph{(Beurling) generalized number system} is merely a pair of non-decreasing right continuous functions $N$ and $\Pi$ with $N(1)=1$ and $\Pi(1)=0$, both having support in $[1,\infty)$, and linked via the relation

\begin{equation}
\label{defzetaextended}
\zeta(s) :=\int^{\infty}_{1^{-}} x^{-s}\mathrm{d}N(x)= \exp\left(\int^{\infty}_{1}x^{-s}\mathrm{d}\Pi(x)\right),
\end{equation}
on some half-plane where the integrals are assumed to be convergent. We refer to $N$ as the generalized number distribution function and call $\Pi$ the Riemann prime distribution function of the generalized number system. These functions uniquely determine one another; in fact, $\mathrm{d}N=\exp^{\ast}(\mathrm{d}\Pi)$, where the exponential is taken with respect to the multiplicative convolution of measures \cite{diamond1}. We are only interested in generalized number systems for which the region of convergence of the zeta function (\ref{defzetaextended}) is at least $\Re e\:s>1$, and hence we assume this condition in the sequel\footnote{This assumption is actually no restriction at all. In fact, if the zeta only converges on $\Re e\: s>\alpha>0$, one may then perform a simple change of variables and replace $N$ and $\Pi$ by the generalized number system $\alpha N(x^{1/\alpha})$ and $\alpha \Pi(x^{1/\alpha})$.}. The latter assumption clearly implies that $N(x)$ and $\Pi(x)$ are both $O(x^{1+\varepsilon})$, for each $\varepsilon>0$. 

If $N$ is the counting function of a discrete number system with prime counting function $\pi$, as defined in the Introduction via (\ref{defcounting}), the Riemann prime counting function of the discrete generalized number system is given by 
\begin{equation}
\label{defriemann}
\Pi(x) = \sum^{\infty}_{j=1} \frac{\pi(x^{1/j})}{j}\: .
\end{equation}
Since $\pi(x)$ vanishes for $x<p_1$, the sum (\ref{defriemann}) is actually finite and in particular convergent. It is not difficult to verify that (\ref{defriemann}) satisfies (\ref{defzetaextended}); indeed, 
$$
\exp\left(\int_{1}^{\infty}x^{-s}\mathrm{d}\Pi(x)\right)=\exp\left(-\int_{1}^{\infty}\log (1-x^{-s})\mathrm{d}\pi(x)\right)=\prod_{k=1}^{\infty}\left(1-p_{k}^{-s}\right)^{-1},
$$
and thus (\ref{defzetaextended}) becomes in this case a restatement of the well-known Euler product formula for the zeta function of a discrete generalized number system \cite{bateman-diamond}. 

The function $\Pi$ may be replaced by $\pi$ in virtually any asymptotic formula about discrete generalized primes. More precisely, we have that $0\leq \Pi(x)-\pi(x)\leq \pi(x^{1/2})+\pi(x^{1/3})\log x/\log p_1$; in particular, $\Pi(x)=\pi(x)+O(x^{1/2+\varepsilon})$, for each $\varepsilon>0$, for a discrete generalized number system satisfying our assumption that its associated zeta function $\zeta(s)$ converges on $\Re e\:s>1$. Naturally, a Chebyshev type bound $\pi(x)=O(x/\log x)$ yields the better asymptotic relation $\Pi(x)=\pi(x)+O(x^{1/2}/\log x)$.
However, we mention that, in general, it is not always possible to determine a non-decreasing function $\pi$ satisfying (\ref{defriemann}) for (non-discrete) generalized number systems as defined above (cf. \cite{hilberdink}). Therefore, we only work with $\Pi$ in order to gain generality.
 
\subsection{Fourier Transforms and Distributions}
Fourier transforms, normalized as $\hat{f}(t)=\int_{-\infty}^{\infty}e^{-itx}f(x)\:\mathrm{d}x$, will be taken in the sense of tempered distributions; see \cite{estrada-kanwal,vladimirov} for distribution theory. The standard Schwartz spaces of compactly supported and rapidly decreasing smooth test functions are denoted as usual by $\mathcal{D}(\mathbb{R})$ and $\mathcal{S}(\mathbb{R})$, while $\mathcal{D}'(\mathbb{R})$ and $\mathcal{S}'(\mathbb{R})$ stand for their topological duals, the spaces of distributions and tempered distributions. The dual pairing between a distribution $f$ and a test function $\phi$ is denoted as $\langle f,\phi\rangle $, or as $\langle f(u),\phi(u) \rangle$ with the use of a variable of evaluation; when $f$ is a regular distribution we of course have  $\langle f(u),\phi(u) \rangle=\int_{-\infty}^{\infty}f(u)\phi(u)\mathrm{d}u$. For $f\in\mathcal{S}'(\mathbb{R})$, its Fourier transform $\hat{f}\in\mathcal{S}'(\mathbb{R})$ is defined via  duality as 
$\langle \hat{f}(t),\phi(t)\rangle:=\langle f(u),\hat{\phi}(u)\rangle$, for each $\phi\in\mathcal{S}(\mathbb{R})$. If $f\in\mathcal{S}'(\mathbb{R})$ has support in $[0,\infty)$, its Laplace transform is
$\mathcal{L}\left\{f;s\right\}:=\left\langle f(u),e^{-su}\right\rangle,$ 
$\Re e\:s>0,$ and its Fourier transform $\hat{f}$ is the distributional boundary value of $\mathcal{L}\left\{f;s\right\}$ on $\Re e\:s=0$. 

We also mention that asymptotic estimates $O(g(x))$ are meant for $x\gg1$ unless otherwise specified.

\section{A Tauberian Theorem with Remainder}\label{section Tauberian theorems}

The following Tauberian remainder theorem for Laplace transforms will be our main tool for translating information on the zeta function of a generalized number system into asymptotic properties for the functions $N$ and $\Pi$ in the next section. Theorem~\ref{thtaub1} extends a Tauberian result by Nyman (cf. \cite[Lemma~II]{nyman}). We point out that our $O$-constants hereafter depend on the parameter $n\in\mathbb{N}$.

\begin{theorem} \label{thtaub1}
Suppose $S$ is non-decreasing and $T$ is a function of (locally) bounded variation such that it is absolutely continuous for large arguments and $T'(x) \leq A e^{x}$ with $A \geq 0$. Let both functions have support in $[0,\infty)$. Assume that
\[ 
 G(s) = \int^{\infty}_{0^{-}} e^{-su} (\mathrm{d}S(u)-\mathrm{d}T(u)) \quad\text{is convergent for } \Re e \: s > 1
\]
and can be extended to a $C^{\infty}$-function on the line $\Re e \: s = 1$, admitting the following bounds:
\begin{equation} \label{assumption2taub} G^{(n)}(1+it)= O( \left|t\right|^{\beta_{n}}) \quad \mbox{for each } n \in \mathbb{N},
\end{equation}                                                                                           
where the $\beta_{n}$ are such that 
\begin{equation}
\label{eqgrowthexponents}
\lim_{n\to\infty}\frac{\beta_{n}}{n} =0.
\end{equation}
Then,  
the ensuing asymptotic formula holds:
\begin{equation} \label{resulttaub} S(x) = T(x) + O\left(\frac{e^{x}}{x^{n}}\right), \quad \mbox{ for all } n \in \mathbb{N}. 
\end{equation}                                                       
\end{theorem}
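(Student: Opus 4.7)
The plan is to pass to the Fourier side, exploit the smoothness of $G$ on the line $\Re e\,s = 1$ to derive super-polynomial decay of suitable convolutions of the remainder, and then combine this with the monotonicity of $S$ in a Tauberian sandwich. Set $E(x):= e^{-x}(S(x)-T(x))$, which is supported in $[0,\infty)$. An integration by parts in the Laplace transform of $E$ yields $\mathcal{L}\{E;s\}=G(s+1)/(s+1)$ for $\Re e\,s>0$, so that taking distributional boundary values on $\Re e\,s=0$ identifies $\hat E$ with the $C^{\infty}$ function $t\mapsto G(1+it)/(1+it)$. By Leibniz's rule together with (\ref{assumption2taub}) and (\ref{eqgrowthexponents}), its derivatives satisfy $\hat{E}^{(n)}(t)=O(|t|^{\gamma_n})$ with $\gamma_n/n\to 0$.

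For any $\phi\in\mathcal{D}(\mathbb{R})$, Parseval's formula produces
\[
(E*\phi)(x) \,=\, \frac{1}{2\pi}\int_{-\infty}^{\infty}\hat{E}(t)\,\hat{\phi}(t)\,e^{ixt}\,\mathrm{d}t.
\]
Since each $\hat{E}^{(k)}$ grows only polynomially and $\hat{\phi}\in\mathcal{S}(\mathbb{R})$, the product $\hat{E}\hat{\phi}$ is Schwartz, and $n$ integrations by parts in $t$ yield $(E*\phi)(x)=O(|x|^{-n})$ for every $n\in\mathbb{N}$, with a constant depending on $\phi$. Writing $U=S-T$ and absorbing the factor $e^{u}$ into the test function, this is equivalent to $(U*\psi)(x)=O(e^{x}/x^{n})$ for all $\psi\in\mathcal{D}(\mathbb{R})$.

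The Tauberian step goes as follows. Pick non-negative $\phi_{\pm}\in\mathcal{D}(\mathbb{R})$ of unit mass, supported respectively in $[-1,0]$ and $[0,1]$, and dilate $\phi_{\pm,h}(u):=h^{-1}\phi_{\pm}(u/h)$. Monotonicity of $S$ gives the pointwise inequalities $(S*\phi_{+,h})(x)\le S(x)\le(S*\phi_{-,h})(x)$, while the one-sided bound $T'(v)\le Ae^{v}$ yields $(T*\phi_{\pm,h})(x)-T(x)=O(he^{x})$ for small $h$ (both in the upper and lower directions, since the $S$-monotonicity already forces the convolution to be compared with $T$ on the correct side). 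Combining, one obtains the sandwich
\[
(U*\phi_{+,h})(x)-O(he^{x}) \,\le\, U(x) \,\le\, (U*\phi_{-,h})(x)+O(he^{x}).
\]
From the second step, using $\hat{\phi}_{\pm,h}(t)=\hat{\phi}_{\pm}(ht)$, rescaling in the Parseval integral introduces negative powers of $h$ into the constant, producing an estimate of the shape $(U*\phi_{\pm,h})(x)=O(e^{x}/(x^{n}h^{c_n}))$ with $c_n$ controlled by the $\beta_k$, $k\le n$. Balancing $h=h(x,n)$ against $he^{x}$ then gives $U(x)=O(e^{x}/x^{m})$ for every $m\in\mathbb{N}$, which is (\ref{resulttaub}). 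The main obstacle is the precise bookkeeping of how $c_n$ depends on the $\beta_k$; the hypothesis $\beta_n/n\to 0$ is exactly what guarantees $n/(c_n+1)\to\infty$, so that the balance succeeds for every prescribed decay $x^{-m}$.
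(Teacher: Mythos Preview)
Your proposal is correct and follows essentially the same Fourier--Tauberian scheme as the paper: identify $\hat{E}(t)=G(1+it)/(1+it)$, integrate by parts $n$ times against a dilated test function to produce $x^{-n}h^{-c_n}$ decay of the mollified remainder, and then exploit the one-sided conditions on $S$ and $T'$ to pass from the mollification back to the pointwise value. The only organizational difference is in the Tauberian step: the paper derives a local oscillation inequality for $\Delta=e^{-x}(S-T)$ (namely $\Delta(y)\ge\Delta(x)/4$ on an interval of length $\varepsilon\sim\Delta(x)$) and takes the mollification scale $\varepsilon=\Delta(h)/(2A)$, which turns the estimate into the self-referential bound $\Delta(h)^{1+\beta_n}=O(h^{-n})$ and makes the bookkeeping you flag as ``the main obstacle'' entirely transparent; your sandwich $(U*\phi_{+,h})-O(he^x)\le U\le(U*\phi_{-,h})+O(he^x)$ followed by an explicit optimization in $h$ is an equivalent way to run the same argument (and your justification that only the one-sided bound $T'\le Ae^x$ is needed is correct, though the reason is simply that the required inequality on $(T*\phi_{\pm,h})-T$ has the right sign in each case, not any interplay with $S$-monotonicity).
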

\begin{proof} Clearly, by enlarging the exponents in (\ref{assumption2taub}) if necessary, we may assume the $\beta_n$ is a non-decreasing sequence of positive numbers. Modifying $T$ on finite intervals does not affect the rest of the hypotheses, so we assume that $T$ is locally absolutely continuous on the whole $[0,\infty)$ and that the upper bound on its derivative holds globally.  Furthermore, we may assume without loss of generality that $T'(x) \geq 0$. Indeed, if necessary we may replace $S$ by $S + T_{-}$ and $T$ by $T_{+}$, where $T(x) = T_{+}(x) - T_{-}(x)$ with $T_{+}$ and $T_{-}$ the distribution functions of the positive and negative parts of $T'$. Since $T(x)=O(e^{x})$, the Laplace-Stieltjes transform of $S$ also converges on $\Re e\:s>1$. Thus,
\begin{align*}
S(x)=\int_{0^{-}}^{x}\mathrm{d}S(u)\leq e^{\sigma x}\int_{0^{-}}^{\infty}e^{-\sigma u}\mathrm{d}S(u)=O_{\sigma}(e^{\sigma x}), \quad \sigma>1.
\end{align*}
Let us define $\Delta(x) = e^{-x}(S(x) - T(x))$ and calculate its Laplace transform,
\begin{align*} 
\mathcal{L}\{\Delta;s\} & = \int^{\infty}_{0} e^{(-s-1)u}(S(u)-T(u))\mathrm{d}u = \frac{1}{1+s} \int^{\infty}_{0^{-}} e^{(-s-1)u}\mathrm{d}(S-T)(u) \\
& = \frac{1}{1+s} \mathcal{L}\{\mathrm{d}S-\mathrm{d}T;s+1\} = \frac{G(s+1)}{s+1}, \quad \Re e\:s>0,
\end{align*}
where we have used that $\Delta (x)=o(e^{\eta x})$ for each $\eta>0$. 
Setting $s = \sigma + it$ and letting $\sigma \rightarrow 0^{+}$ in this expression in the space $\mathcal{S}'(\mathbb{R})$, we obtain that the Fourier transform of $\Delta$ is the smooth function
$$
\hat{\Delta}(t) = \frac{G(1+it)}{1+it}.
$$ Since $\beta_{n}$ is non-decreasing, we obtain the estimates
\begin{equation}
\label{eqextra5}
\hat{\Delta}^{(n)}(t)=  O((1+|t|)^{\beta_{n}- 1}).
\end{equation}

We now derive a useful Tauberian condition on $\Delta$ from the assumptions on $S$ and $T$. If $x \leq y \leq x + \min\{ \Delta(x)/2A,\log(4/3)\}$ and $\Delta(x) > 0$, we find, by using the upper bound on $T'$,
\begin{align*}                                                                         
	\Delta(y) & = \frac{S(y)- T(y)}{e^{y}} \geq \frac{S(x) - T(x)}{e^{x}}\frac{e^{x}}{e^{y}} - A(y-x) \geq \Delta(x) \frac{e^{x}}{e^{y}} - \frac{\Delta(x)}{2}
		\\
		&
		 \geq \frac{\Delta(x)}{4}.
\end{align*} 
Similarly one can show that 
$$
-\Delta(y) \geq -\Delta(x)/2 \quad \mbox{if }  x + \Delta(x)/2A \leq y \leq x \mbox{ and } \Delta(x) < 0.
$$
We now estimate $\Delta(h)$ in the case $\Delta(h) > 0$. Set $\varepsilon = \min\{ \Delta(h)/2A,\log(4/3)\}$ and choose $\phi \in \mathcal{D}(0,1)$ such that $\phi \geq 0$ and $\int_{-\infty}^{\infty} \phi(x)\mathrm{d}x = 1$. We obtain
\begin{align*}
 \Delta(h) & = \frac{1}{\varepsilon}\int^{\varepsilon}_{0} \Delta(h) \phi\left(\frac{x}{\varepsilon}\right)\mathrm{d}x 
 \\
 &
 \leq \frac{4}{\varepsilon} \int^{\varepsilon}_{0} \Delta(x+h) \phi \left(\frac{x}{\varepsilon}\right)\mathrm{d}x 
= \frac{2}{\pi} \int^{\infty}_{-\infty} \hat{\Delta}(t) e^{iht} \hat{\phi}(-\varepsilon t) \mathrm{d}t 
 \\
&
= \frac{2}{(ih)^{n}\pi} \int^{\infty}_{-\infty} e^{iht} \left(\hat{\Delta}(t) \hat{\phi}(-\varepsilon t)\right)^{(n)} \mathrm{d}t
\\
&
 \leq \frac{2}{h^{n}\pi}\sum^{n}_{j=0} {n \choose j} \int^{\infty}_{-\infty} \left|\hat{\Delta}^{(j)}\left(\frac{t}{\varepsilon}\right) \hat{\phi}^{(n-j)}(-t)\right| \varepsilon^{n-j-1} \mathrm{d}t
 \\
 &
 = O\left(\frac{1}{h^{n} \varepsilon^{\beta_{n}}}\right),
\end{align*}
where we have used $\hat{\phi} \in \mathcal{S}(\mathbb{R})$ and (\ref{eqextra5}). If $\Delta(x) < 0$ one gets an analogous estimate by using a $\phi \in \mathcal{D}(-1,0)$ such that $\phi \geq 0$ and $\int_{-\infty}^{\infty} \phi(x)\mathrm{d}x =  1$. If $\varepsilon = \log(4/3)$, it clearly follows that $\Delta(h) = o(1)$ and we may thus assume that $\varepsilon = \Delta(h)/2A$. This gives that $\Delta(h) = O_{n}(h^{-n/(\beta_{n} + 1)})$ which proves (\ref{resulttaub}) because of (\ref{eqgrowthexponents}).
\end{proof}

We will also need a converse result, an Abelian counterpart. It is noteworthy that the bounds for $G^{(n)}(1+it)$ we get from the converse result are actually much better than the ones needed for Theorem \ref{thtaub1}. 

\begin{proposition} \label{conversetauberian} Let $S$ be a non-decreasing function, let $T$ be of (locally) bounded variation such that it is absolutely continuous for large arguments and $T'(x) \leq Ae^{x}$ for some positive $A$, and let both functions have support in $[0,\infty)$. Suppose that the asymptotic estimate $(\ref{resulttaub})$ holds for all $n$.
Then,
\begin{equation*}
 G(s) = \int^{\infty}_{0^{-}} e^{-su} (\mathrm{d}S(u)-\mathrm{d}T(u)) \ \ \ \text{is convergent for } \Re e \: s \geq 1.
\end{equation*}
Furthermore, $G$ is $C^{\infty}$ on $\Re e \: s = 1$ and for each $\varepsilon>0$ and $n \in \mathbb{N}$ its $n$-th derivative satisfies the bound 
\begin{equation}
\label{eqboundderivatives}
G^{(n)}(\sigma+it) = O((1+\left|t\right|)^{\varepsilon}), \quad \sigma\geq1,\ t\in\mathbb{R},
\end{equation} 
with global $O_{\varepsilon,n}$-constants.
Moreover, if $T'(x) \leq B x^{-1}e^{x}$ for some positive $B$ and $x\gg 1$, then the better asymptotic estimate 
\begin{equation}
\label{eq bound G}
G(\sigma+it)=o(\log|t|)
\end{equation}
is valid uniformly for $\sigma\geq1$ as  $|t|\to\infty$.
\end{proposition}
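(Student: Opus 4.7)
Set $R := S - T$; the hypotheses give $R(u) = O(e^u/u^n)$ for every $n$, $R(0^-) = 0$, and $e^{-su}R(u)\to 0$ as $u\to\infty$ whenever $\Re e\, s\geq 1$. Integration by parts in the Stieltjes integral yields
\[
G(s) \;=\; s\int_0^\infty e^{-su}R(u)\,du \;=:\; sH(s), \qquad \Re e\, s\geq 1,
\]
with the integral absolutely convergent because $|e^{-u}R(u)|\leq C_2/u^2$ at infinity; this settles the first claim. Differentiating formally under the integral sign, $H^{(k)}(s) = \int_0^\infty(-u)^k e^{-su}R(u)\,du$, and using $|R(u)|\leq C_{k+2}e^u u^{-k-2}$ one sees the integrand is majorized by $C_{k+2}u^{-2}$ uniformly in $\sigma\geq 1$. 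Dominated convergence then gives $H\in C^\infty(\{\Re e\, s\geq 1\})$, and hence so is $G=sH$. Moreover the uniform boundedness $|H^{(k)}(\sigma+it)|\leq \int_0^\infty u^k e^{-u}|R(u)|\,du =: C_k < \infty$ holds throughout $\sigma\geq 1$.

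By Leibniz, $G^{(n)}(s) = sH^{(n)}(s) + nH^{(n-1)}(s)$, so the preceding bound already yields $G^{(n)}(\sigma+it) = O(1+|t|)$ as a first estimate. To sharpen this to $O((1+|t|)^\varepsilon)$, I view $H^{(n)}(1+it)$ as the Fourier transform of the rapidly decreasing $L^1$ function $f_n(u) := (-u)^n e^{-u}R(u)\mathbf{1}_{[0,\infty)}$, for which Riemann--Lebesgue gives only $o(1)$. To extract a polynomial rate, write $f_n = f_n\ast\psi_\delta + (f_n - f_n\ast\psi_\delta)$ for a Schwartz mollifier of width $\delta$: the Fourier transform of the convolution is bounded by $O_N((1+|\delta t|)^{-N})$ for any $N$, and the error is dominated by the $L^1$-modulus of continuity $\omega_1(\delta;f_n)$. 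The latter is controlled by splitting $f_n$ at a scale $X = X(|t|)$: on $[0,X]$, $dR$ has finite total variation (bounded in terms of $S(X)$ and $\sup_{[0,X]}|T'|$), giving $\omega_1(\delta;f_n\mathbf{1}_{[0,X]}) \lesssim \delta\cdot TV_{[0,X]}(f_n)$, while the tail $\|f_n\mathbf{1}_{(X,\infty)}\|_1$ is rapidly small by the decay of $R$. Balancing $X \asymp \log|t|$ and $\delta\asymp (1+|t|)^{\varepsilon-1}$ produces $|\hat f_n(t)| = O((1+|t|)^{\varepsilon-1})$, whence $G^{(n)}(\sigma+it) = O((1+|t|)^\varepsilon)$ on multiplying by $|s|$.

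For the refined $o(\log|t|)$ bound under $T'(x)\leq Bx^{-1}e^x$, the key added input is that $e^{-u}T'(u) = O(1/u)$, forcing $T(u), S(u) = O(e^u/u)$ and $\int_0^X e^{-u}\,d|R|(u) = O(\log X)$. This crude majorant already gives $G(\sigma+it) = O(\log|t|)$; the improvement to $o(\log|t|)$ is obtained by splitting the Stieltjes integral for $G(1+it)$ at $u\asymp\log|t|$, applying Riemann--Lebesgue slice-by-slice to the absolutely continuous component $T'\,du$, and handling the tail via integration by parts that exploits the oscillation of $e^{-itu}$ against the rapid decay of $R$.

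\textbf{Main obstacle.} The crux is the quantitative Riemann--Lebesgue step in the middle paragraph: the rapid decay of $R$ alone gives only boundedness of $\hat f_n$, since $R$ carries no pointwise smoothness. Extracting any polynomial saving therefore requires a delicate trade-off between mollification error and spectral decay, with the truncation radius $X(|t|)$ chosen so that the growth of the local total variation $TV_{[0,X]}(f_n)$ does not destroy the gain in $|t|$.
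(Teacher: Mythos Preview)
Your setup via $G=sH$ and the uniform boundedness of $H^{(k)}$ is fine, but the parameter balancing in the second paragraph contains a genuine error. With $X\asymp\log|t|$, the tail $\|f_n\mathbf{1}_{(X,\infty)}\|_1$ is only $O((\log|t|)^{n+1-\gamma})$: this decays faster than any negative power of $\log|t|$, but it is \emph{not} $O(|t|^{\varepsilon-1})$ for any $\varepsilon<1$, so the claimed bound $|\hat f_n(t)|=O((1+|t|)^{\varepsilon-1})$ does not follow. (``Rapidly small in $X$'' is not the same as ``polynomially small in $|t|$'' when $X$ is only logarithmic in $|t|$.) The fix is to take $X=|t|^{\eta}$ for a small $\eta>0$ depending on $\varepsilon$ and $n$; then the tail is $O(|t|^{\eta(n+1-\gamma)})$, which is polynomially small once $\gamma$ is taken large, while the total-variation term $\delta X^{n+1}=|t|^{\varepsilon-1+\eta(n+1)}$ remains acceptable. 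With this correction your mollification argument goes through.

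That said, the mollification is an unnecessary detour. The paper proceeds more directly: split the defining Stieltjes integral for $G^{(n)}(s)$ at $X=|t|^{1/\gamma}$, bound $\int_{0}^{X}u^{n}e^{-u}\,\mathrm{d}S$ and $\int_{0}^{X}u^{n}e^{-u}|T'|\,\mathrm{d}u$ each by $O(X^{n+1})$ via one integration by parts (using only $S,T=O(e^{x})$), and handle the tail $\int_{X}^{\infty}$ by a single integration by parts that trades $\mathrm{d}R$ for $R$, picking up a factor $|s|$ but gaining $O(X^{n+1-\gamma})$ from the rapid decay of $R$. This yields $G^{(n)}=O(|t|^{(n+1)/\gamma})$ in one stroke. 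Your route extracts the same saving via Fourier-side smoothing rather than a physical-side integration by parts; the two are equivalent in spirit, but the direct splitting is shorter and makes the correct scale $X=|t|^{1/\gamma}$ (a small power of $|t|$, not $\log|t|$) transparent.

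For the $o(\log|t|)$ claim, the Riemann--Lebesgue refinement you sketch is also unnecessary, and your outline does not clearly deliver uniformity in $\sigma\geq 1$. Under $T'\leq Bx^{-1}e^{x}$ one has $S,T=O(x^{-1}e^{x})$, so the two bulk integrals $\int_{0}^{X}e^{-u}\,\mathrm{d}S$ and $\int_{0}^{X}e^{-u}|T'|\,\mathrm{d}u$ improve from $O(X)$ to $O(\log X)$, with a constant independent of $\gamma$. Taking $X=|t|^{1/(\gamma-1)}$ then already gives $|G(\sigma+it)|\leq \tfrac{C}{\gamma-1}\log|t|+O(1)$; letting $\gamma\to\infty$ produces $o(\log|t|)$ uniformly in $\sigma\geq1$, with no appeal to Riemann--Lebesgue.
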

\begin{proof}
As in the proof of Theorem \ref{thtaub1}, we may assume that $T$ is locally absolutely continuous on $[0,\infty)$ and $0\leq T'(x) \leq Ae^{x}$. From the assumptions it is clear that $S$ as well as $T$ are $O(e^{x})$. The asymptotic estimates (\ref{resulttaub}) obviously give the convergence of $G(s)$ for $\Re e\:s\geq 1$ and the fact that $G$ is $C^{\infty}$ on $\Re e\: s=1$. Let us now show the asymptotic bounds (\ref{eqboundderivatives}). It is clear that it holds with $\varepsilon=0$ on the half-plane $\sigma\geq2$. We thus restrict our attention to the strip $1\leq \sigma<2$. We keep $|t|\geq 1$.  Let $X \gg 1$ be a constant, which we will specify later. We have 
\begin{align} 
G(s) =  &\int^{X}_{0^{-}}e^{-sx} \mathrm{d}S(x)- \left(\int^{X}_{0}e^{-sx}T'(x) \mathrm{d}x +T(0)\right)
\nonumber
\\
&
\label{eqformulaG}
\quad + s \int^{\infty}_{X} e^{-sx} \left(S(x) - T(x)\right) \mathrm{d}x + e^{-sX}\left(S(X)-T(X)\right).
\end{align}
We differentiate the above formula $n$ times and bound each term separately. The first term can be estimated by
\begin{align*}               
\left|\int^{X}_{0^{-}}e^{-sx} (-x)^{n}\mathrm{d}S(x)\right| 
& \leq \int^{X}_{0^{-}}e^{-x}x^{n} \mathrm{d}S(x)
\\
& = e^{-X}X^{n}S(X) + \int^{X}_{0} e^{-x}x^{n}S(x) \mathrm{d}x - n\int^{X}_{0}e^{-x}x^{n-1}S(x) \mathrm{d}x 
\\
& \leq CX^{n+1},
\end{align*}
as $S$ is non-decreasing and $O(e^{x})$. The second term from (\ref{eqformulaG}) can be bounded in a similar way by this quantity, while the last term is even $O(1)$. It thus remains to bound the third term from (\ref{eqformulaG}). Suppose that $S(x) - T(x) = O(e^{x}x^{-\gamma})$, where $\gamma > n + 1$, then
\begin{equation*}
 \left|\int^{\infty}_{X} e^{-sx}x^{n} \left(S(x) - T(x)\right)\mathrm{d}x\right| \leq \int^{\infty}_{X}x^{n-\gamma} \mathrm{d}x \leq C'X^{n-\gamma+1}.
\end{equation*} 
Combining these inequalities and choosing $X = \left|t\right|^{1/\gamma}$, we obtain
\begin{equation*}
 \left|G^{(n)}(\sigma+it)\right| \leq C'' X^{n+1} + C' (2+\left|t\right|)X^{n-\gamma +1} = O\left(\left|t\right|^{\frac{n+1}{\gamma}}\right).
\end{equation*}
Since $\gamma$ can be chosen arbitrarily large, (\ref{eqboundderivatives}) follows. 

The proof of (\ref{eq bound G}) is similar if we work under the assumption $T'(x) \leq Bx^{-1}e^{x}$. This bound implies that $T(x) \ll \operatorname*{Li}(e^{x}) = O(x^{-1}e^{x})$, which gives $S(x)=O(x^{-1}e^{x})$ as well. The starting point is again the formula (\ref{eqformulaG}) for $G$. Via the same reasoning as above, the bounds for the first and second term, in case $n=0$, can be improved to $O(\log X)$. Employing the same bound for the third term, we obtain the result after choosing $X = \left|t\right|^{1/(\gamma -1)}$ and letting $\gamma \rightarrow \infty$.

\end{proof}                       

\section{The PNT with Nyman's Remainder}\label{Section Nyman PNT}

We establish in this section the following general form of Theorem \ref{thmain}:

\begin{theorem} \label{thPNTNyman}
 For a generalized number system, the following four statements are equivalent:
\begin{itemize}
\item[$(i)$] For some $a > 0$, the generalized integer distribution function $N$ satisfies
\begin{equation} 
\label{asymNNremainder}
 N(x) = ax + O\left(\frac{x}{\log^{n}x} \right), \quad \text{for all } n \in \mathbb{N}.
\end{equation}
\item[$(ii)$]  For some $a>0$, the function 
\begin{equation}
\label{eqzetapole}
G(s)= \zeta(s)-\frac{a}{s-1}
\end{equation}
has a $C^{\infty}$-extension to $\Re e\:s\geq 1$ and there is some $\varepsilon>0$ such that
\begin{equation}
\label{eqzetapolederivativesbound}
G^{(n)}(1+it)=O(|t|^{\varepsilon}), \quad \text{for all } n \in \mathbb{N}.
\end{equation}             
\item[$(iii)$] For some $a>0$ and each $\varepsilon>0$, the function (\ref{eqzetapole}) satisfies
\begin{equation}
\label{eq2zetapolederivativesbound}
G^{(n)}(\sigma+it)=O((1+|t|)^{\varepsilon}), \quad \sigma>1,\: t\in\mathbb{R}, \quad \text{for all } n \in \mathbb{N},
\end{equation}        
with global $O_{\varepsilon,n}$-constants.                                                       
\item [$(iv)$] The  Riemann prime distribution function $\Pi$ satisfies
\begin{equation}
\label{relationPi} 
 \Pi(x) = \operatorname*{Li}(x) + O\left(\frac{x}{\log^{n}x} \right), \quad \text{for all } n \in \mathbb{N}.
\end{equation}
\end{itemize} 
\end{theorem}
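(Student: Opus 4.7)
The plan is to establish the four-way equivalence via the cycle $(ii) \Rightarrow (i) \Rightarrow (iii) \Rightarrow (iv) \Rightarrow (ii)$, supplemented by the trivial implication $(iii) \Rightarrow (ii)$. Throughout, the arguments pivot on the Tauberian Theorem \ref{thtaub1} and its Abelian converse Proposition \ref{conversetauberian}, applied after the logarithmic substitution $x = e^u$ that puts power-of-$x$ asymptotics into the format $e^{x}/x^{n}$ required by those results.

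\textbf{The triangle $(i) \Leftrightarrow (ii) \Leftrightarrow (iii)$.} Set $S(u) = N(e^u)$ and $T(u) = a(e^u - 1)$, both supported in $[0,\infty)$; then $S$ is nondecreasing, $T'(u) = ae^u$, and one identifies the associated function as $G(s) = \zeta(s) - a/(s-1)$. Proposition \ref{conversetauberian} immediately yields $(i) \Rightarrow (iii)$, while Theorem \ref{thtaub1}---applied with the constant exponents $\beta_n \equiv \varepsilon$, which trivially satisfy $\beta_n/n \to 0$---yields $(ii) \Rightarrow (i)$. The implication $(iii) \Rightarrow (ii)$ is obtained by restricting to $\sigma = 1$ and fixing any single $\varepsilon > 0$.

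\textbf{The implications $(iii) \Rightarrow (iv)$ and $(iv) \Rightarrow (ii)$.} For $(iii) \Rightarrow (iv)$, I would first recast the target asymptotic as $\psi(x) = x + O(x/\log^n x)$ for all $n$, where $\psi(x) := \int_{1^{-}}^{x} \log t\, \mathrm{d}\Pi(t)$ is the Chebyshev function of $\Pi$; this equivalence is routine partial summation. Theorem \ref{thtaub1} is then applied to $S(u) = \psi(e^u)$ and $T(u) = e^u - 1$, whose associated function is
\[
G_\psi(s) = -\frac{\zeta'(s)}{\zeta(s)} - \frac{1}{s-1} = -\frac{((s-1)\zeta(s))'}{(s-1)\zeta(s)}.
\]
Smoothness of $G_\psi$ on $\Re e\, s \geq 1$ and the required polynomial bounds come from $(iii)$ together with the non-vanishing of $\zeta$ on $\Re e\, s = 1$, which I would obtain from the Hadamard---de la Vall\'ee-Poussin inequality $|\zeta(\sigma)|^{3}|\zeta(\sigma+it)|^{4}|\zeta(\sigma+2it)| \geq 1$ (valid here since $2(1+\cos\theta)^{2} \geq 0$ and $\mathrm{d}\Pi \geq 0$) combined with the upper bounds of $(iii)$ and the simple pole at $s = 1$. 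For $(iv) \Rightarrow (ii)$, I would apply Proposition \ref{conversetauberian} instead, to $S(u) = \Pi(e^u)$ and $T(u) = \operatorname*{Li}(e^u) - \operatorname*{Li}(2)$ extended by zero on $u < \log 2$. Since $T'(u) = e^u/u \leq u^{-1}e^u$, the sharper ``moreover'' clause of Proposition \ref{conversetauberian} is available, yielding that $G_\Pi(s) = \log\zeta(s) - F(s)$ is $C^{\infty}$ on $\Re e\, s \geq 1$ with $G_\Pi(\sigma+it) = o(\log|t|)$ (and polynomial bounds on derivatives), where $F(s) = \int_{2}^{\infty} x^{-s}/\log x\,\mathrm{d}x$. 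A short analysis of $F$ (via $F'(s) = -2^{1-s}/(s-1)$) shows that $F(s) + \log(s-1)$ extends analytically across $s = 1$ and $F^{(n)}(1+it) = O(|t|^{-1})$ for $n \geq 1$. Exponentiating the identity $(s-1)\zeta(s) = \exp(F(s) + \log(s-1) + G_\Pi(s))$ and differentiating via Fa\`{a} di Bruno then gives $\zeta(1+it) = |t|^{o(1)}$ and $\zeta^{(n)}(1+it) = O(|t|^\varepsilon)$ for every $\varepsilon > 0$, which is precisely $(ii)$, with $a = \lim_{s \to 1}(s-1)\zeta(s) > 0$.

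\textbf{Main obstacle.} The most delicate step is $(iii) \Rightarrow (iv)$: extracting a quantitative lower bound $|\zeta(1+it)|^{-1} = O(|t|^{c\varepsilon})$ from the $3$-$4$-$1$ inequality (by optimizing $\sigma - 1$ as a suitable power of $|t|$), and then feeding it---together with the upper bounds of $(iii)$---into the Fa\`{a} di Bruno expansion of $(\zeta'/\zeta)^{(n)}$ so that the resulting growth exponents $\beta_n$ for $G_\psi^{(n)}(1+it)$ satisfy the precise Tauberian condition $\beta_n/n \to 0$. This is feasible precisely because $(iii)$ permits $\varepsilon$ to be chosen arbitrarily small for each $n$ separately; in fact one can arrange $\beta_n \to 0$.
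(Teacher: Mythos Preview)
Your proposal is correct and rests on the same two pillars as the paper---the Tauberian/Abelian pair (Theorem~\ref{thtaub1}, Proposition~\ref{conversetauberian}) and the $3$--$4$--$1$ inequality for the non-vanishing and lower bounds of $\zeta$ on $\Re e\,s=1$. The differences are tactical rather than structural.

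For $(iii)\Rightarrow(iv)$ you detour through the Chebyshev function $\psi$ and the logarithmic derivative $-\zeta'/\zeta$, which forces a Fa\`a di Bruno accounting for $(\zeta'/\zeta)^{(n)}$. The paper instead applies Theorem~\ref{thtaub1} directly to $S_1(x)=\Pi(e^{x})$, choosing the comparison function $T_1(x)=\int_1^{e^{x}}(1-y^{-1})/\log y\,\mathrm{d}y=\operatorname{Li}(e^{x})-\log x+A$. This specific choice produces the clean identity
\[
G_1(s)=\log\zeta(s)-\log s+\log(s-1)=\log\bigl((s-1)\zeta(s)\bigr)-\log s,
\]
so that the singularity at $s=1$ is already absorbed and the required bounds $G_1^{(n)}(1+it)=O(|t|^{\varepsilon})$ follow immediately from Lemma~\ref{leminversezeta} plus Leibniz. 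The \emph{same} triple $(S_1,T_1,G_1)$ is then reused for $(iv)\Rightarrow(ii)$, avoiding your separate analysis of $F(s)=\int_2^{\infty}x^{-s}/\log x\,\mathrm{d}x$ (which works, but one has to check that $\Re e\,(F(1+it)+\log(it))=O(1)$, a computation you did not spell out). The paper's single choice of $T_1$ thus buys symmetry between the two directions and shorter bookkeeping; your route buys nothing extra but is perfectly valid.

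One small point to tighten: your $(iii)\Rightarrow(ii)$ by ``restricting to $\sigma=1$'' is too quick as stated, since $(iii)$ only gives bounds on the open half-plane $\sigma>1$. The passage to a $C^{\infty}$-extension on the closed line with the same bounds needs a word of justification (the paper handles this via the weak$^{\ast}$ compactness observation of Remark~\ref{rk1Nyman}); the same remark is implicitly needed before you can invoke Theorem~\ref{thtaub1} in your $(iii)\Rightarrow(iv)$ step.
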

\begin{remark}\label{rk1Nyman} The condition $(iii)$ implies the apparently stronger assertion that $G$ has a $C^{\infty}$-extension to $\Re e\:s\geq1$ and that (\ref{eq2zetapolederivativesbound}) remains valid for $\sigma\geq1$, as follows from a standard local $L^{\infty}$ weak$^{\ast}$ compactness argument.
\end{remark}

Before giving a proof of Theorem \ref{thPNTNyman}, we make a comment on reference \cite{nyman}. Therein, Nyman stated that the conditions $(i)$ and $(iv)$ from Theorem \ref{thPNTNyman} were also equivalent to: for each $\varepsilon>0$ and $n\in\mathbb{N}$ 
\begin{equation}
\label{eqNymanwrong}
\zeta^{(n)}(\sigma+it)=O(|t|^{\varepsilon}) \quad \mbox{ and } \quad \frac{1}{\zeta(\sigma+it)}=O(|t|^{\varepsilon}),
\end{equation}
uniformly on the region $\sigma>1$ and $|t|\geq \varepsilon$. It was noticed by Ingham in Mathematical Reviews \cite{inghamreview} that (\ref{eqNymanwrong}) fails to be equivalent to (\ref{asymNNremainder}) and (\ref{relationPi}). In fact (\ref{eqNymanwrong}) can hardly be equivalent to any of these two asymptotic formulas because it does not involve any information about $\zeta$ near $s=1$, contrary to our conditions $(ii)$ and $(iii)$. A large number of counterexamples to Nyman's statement can easily be found among zeta functions arising as generating functions from analytic combinatorics and classical number theory.  We discuss three examples here, the first of them is due to Ingham \cite{inghamreview}, while the second one was suggested by W.-B. Zhang.

\begin{example}
\label{example1Nyman} Consider the generalized primes given by $p_k=2^{k}$. The prime counting function for these generalized primes clearly satisfies $\pi(x)=\log x/\log 2+O(1)$ and therefore (\ref{relationPi}) does not hold. The bound $\pi(x)=O(\log x)$ gives that its associated zeta function is analytic on $\Re e\:s>0$ and satisfies $\zeta^{(n)}(s)=O(1)$ uniformly on any half-plane $\Re e\:s\geq \sigma_{0}>0$. We also have the same bound for $1/\zeta(s)$ because $|\zeta(\sigma)||\zeta(\sigma+it)|\geq1$, which follows from the trivial inequality $1+\cos \theta\geq 0$ (see the 3-4-1 inequality in the proof of Lemma \ref{leminversezeta} below). In particular, Nyman's condition (\ref{eqNymanwrong}) is fulfilled. The generalized integer counting function $N$ does not satisfy (\ref{asymNNremainder}), because, otherwise, $\zeta$ would have a simple pole at $s=1$. Interestingly, in this example $N(x)=\sum_{2^{k}\leq x}p(k)$, where $p$ is the unrestricted partition function, which, according to the celebrated Hardy-Ramanujan-Uspensky formula, has asymptotics
\begin{equation}
\label{Matulaeq5}
p(n)\sim \frac{e^{\pi \sqrt{\frac{2n}{3}}}}{4n\sqrt{3}}\: .
\end{equation}  
From (\ref{Matulaeq5}) one easily deduces
\begin{equation}
\label{Matulaeq6}
N(x)\sim A\: \frac{e^{\pi \sqrt{\frac{2\log x}{3\log 2}}}}{\sqrt{\log x}}\: ,
\end{equation}
with $A= (2\pi\sqrt{2})^{-1}\sqrt{\log 2}$,
but (\ref{Matulaeq5}) and (\ref{Matulaeq6}) simultaneously follow from Ingham's theorem for abstract partitions \cite{Ingham1941}.

\end{example}

\begin{example}\label{example2Nyman} A simple example is provided by the generalized prime number system 
$$
2,2,3,3,5,5,\dots,p,p,\dots,
$$ 
that is, the generalized primes consisting of ordinary rational primes $p$ each taken exactly twice. The set of generalized integers for this example then consists of ordinary rational integers $n$, each repeated $d(n)$ times, where $d(n)$ is the classical divisor function.  In this case the associated zeta function to this number system is the square of the Riemann zeta function, which clearly satisfies Nyman's condition (\ref{eqNymanwrong}). On the other hand, Dirichlet's well known asymptotic estimate for the divisor summatory function and the classical PNT yield
$$
N(x)=\sum_{n\leq x}d(n)= x\log x+ (2\gamma-1)x+O(\sqrt{x})
$$
and
$$
\Pi(x)=2\operatorname*{Li}(x)+O(x\exp(-c\sqrt{\log x})).
$$
\end{example}

\begin{example}\label{example3Nyman} This example and Example \ref{example1Nyman} are of similar nature. This time we use generalized integers that arise as coding numbers of certain (non-planar) rooted trees via prime factorization \cite{Matula1968}. Consider the set of generalized primes given by the subsequence $\{p_{2^{k}}\}_{k=0}^{\infty}$ of ordinary rational primes, where $\{p_{k}\}_{k=1}^{\infty}$ are all rational primes enumerated in increasing order. Using the classical PNT for rational primes, one verifies that the prime counting function $\pi$ of these generalized primes satisfies 
$$
\pi(x)=\frac{\log x}{\log 2}- \frac{\log \log x}{\log 2}+O(1).
$$
By the same reason as above, one obtains that the zeta function of these generalized numbers satisfies Nyman's condition (\ref{eqNymanwrong}). The generalized integers corresponding to this example are actually the Matula numbers of rooted trees of height $\leq$2, whose asymptotic distribution was studied in \cite{v-v-w}; its generalized integer counting function $N$ satisfies
$$
N(x)\sim A(\log x)^{\frac{\log\left(\pi /\sqrt{6\log 2}\right)}{2\log 2}}
\exp\left(\pi\sqrt{\frac{2\log x}{3\log 2}} - \frac {(\log\log x)^{2}}{8\log 2}\right),
$$
for a certain constant $A>0$, see \cite[Thm.~1]{v-v-w}. 
\end{example}
\smallskip

The rest of this section is dedicated to the proof of Theorem \ref{thPNTNyman}. First we derive some bounds on the inverse of the zeta function and the non-vanishing of $\zeta$ on $\Re e\:s=1$.

\begin{lemma} \label{leminversezeta} Suppose that condition $(iii)$ from Theorem \ref{thPNTNyman} holds. Then, $(s-1)\zeta(s)$ has no zeros on $\Re e\:s\geq 1$ and, in particular, $1/\zeta(s)$ has a $C^{\infty}$-extension to $\Re e\:s\geq 1$ as well. Furthermore, for each $\varepsilon > 0$, 
\begin{equation}
\label{eqinversezeta} \frac{1}{\zeta(\sigma+it)} = O\left((1+\left|t\right|)^{\varepsilon}\right), \quad\sigma\geq 1,\ t\in\mathbb{R},
\end{equation}
with a global $O_{\varepsilon}$-constant.
\end{lemma}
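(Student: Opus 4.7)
The strategy will be the classical ``3--4--1'' non-negativity trick of Hadamard and de la Vall\'{e}e Poussin, adapted to the generalized setting and supplemented by a mean value argument that transfers lower bounds from $\sigma>1$ down to $\sigma=1$. Starting from the representation $\log\zeta(s)=\int_{1}^{\infty}x^{-s}\,\mathrm{d}\Pi(x)$ (valid on $\Re e\:s>1$) and using that $\Pi$ is non-decreasing, the elementary identity $3+4\cos\theta+\cos 2\theta=2(1+\cos\theta)^{2}\geq 0$ applied with $\theta=t\log x$ yields, after exponentiating,
$$\zeta(\sigma)^{3}\,|\zeta(\sigma+it)|^{4}\,|\zeta(\sigma+2it)|\geq 1,\qquad \sigma>1,\ t\in\mathbb{R}.$$

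Non-vanishing on $\Re e\:s=1$ would then follow immediately: if $\zeta(1+it_{0})=0$ for some $t_{0}\neq 0$, the $C^{\infty}$-extension provided by $(iii)$ (together with Remark \ref{rk1Nyman}) would force $|\zeta(\sigma+it_{0})|=O(\sigma-1)$ as $\sigma\to 1^{+}$; combined with $\zeta(\sigma)\sim a/(\sigma-1)$ and the boundedness of $\zeta(\sigma+2it_{0})$, the left-hand side of the 3--4--1 inequality would be $O(\sigma-1)\to 0$, a contradiction. The claimed $C^{\infty}$-extension of $1/\zeta$ to $\Re e\:s\geq 1$ would then be immediate from the smoothness of $\zeta$ off $s=1$ and the first-order zero of $1/\zeta$ at $s=1$ coming from the simple pole of $\zeta$ there; analyticity on $\Re e\:s>1$ is automatic since $\zeta=\exp(\log\zeta)$ is non-vanishing on that half-plane.

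For the quantitative bound I would restrict to $|t|\geq 1$, the complement being handled by continuity together with the non-vanishing above. Fix a small $\varepsilon>0$. Remark \ref{rk1Nyman} then supplies $|\zeta(\sigma+2it)|\leq C(1+|t|)^{\varepsilon}$ and $|\zeta'(u+it)|\leq C(1+|t|)^{\varepsilon}$ uniformly in $\sigma,u\in[1,2]$; combined with $\zeta(\sigma)\leq a/(\sigma-1)+O(1)$, the 3--4--1 inequality gives
$$|\zeta(\sigma+it)|\geq c\,(\sigma-1)^{3/4}(1+|t|)^{-\varepsilon/4},\qquad 1<\sigma\leq 2.$$
Since this degenerates at $\sigma=1$, I would combine it with the mean value estimate
$$|\zeta(1+it)|\geq |\zeta(\sigma+it)|-(\sigma-1)\sup_{u\in[1,\sigma]}|\zeta'(u+it)|,$$
set $\sigma-1=\delta(1+|t|)^{-5\varepsilon}$, and choose $\delta>0$ small enough that the first term dominates; this yields $|\zeta(1+it)|\geq c'(1+|t|)^{-4\varepsilon}$. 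A short case analysis (depending on whether $\sigma-1$ is larger or smaller than $\delta(1+|t|)^{-5\varepsilon}$) extends the same bound uniformly to $1\leq\sigma\leq 2$. For $\sigma\geq 2$ the non-negativity of $\mathrm{d}\Pi$ gives directly $\log|\zeta(\sigma+it)|\geq -\log\zeta(\sigma)\geq -\log\zeta(2)$, so $|\zeta(\sigma+it)|\geq\zeta(2)^{-1}$. Because $\varepsilon$ in $(iii)$ is arbitrary, so is the exponent in (\ref{eqinversezeta}).

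The main technical obstacle I anticipate is precisely the balancing in the preceding paragraph: the 3--4--1 lower bound vanishes like $(\sigma-1)^{3/4}$ as $\sigma\to 1^{+}$, so $\sigma-1$ must be chosen neither too small (the direct bound becomes vacuous) nor too large (the mean value correction swamps the main term). The natural balance occurs at $\sigma-1\asymp(1+|t|)^{-5\varepsilon}$, which produces the final exponent $4\varepsilon$ and, after letting $\varepsilon\to 0$, the asserted bound.
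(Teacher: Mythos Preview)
Your proposal is correct and follows essentially the same approach as the paper: the Hadamard $3$--$4$--$1$ inequality to obtain the lower bound $|\zeta(\sigma+it)|\gg(\sigma-1)^{3/4}|t|^{-\varepsilon/4}$, the mean value theorem with the $|\zeta'|\ll|t|^{\varepsilon}$ bound to push this down to $\sigma=1$, and the balancing choice $\sigma-1\asymp|t|^{-5\varepsilon}$ yielding the final exponent $4\varepsilon$. The only cosmetic differences are that the paper handles $\sigma\geq 2$ via the $3$--$4$--$1$ inequality rather than your direct estimate $|\zeta(\sigma+it)|\geq 1/\zeta(\sigma)$, and it writes the mean value step for all $\sigma\in[1,\eta]$ at once rather than first treating $\sigma=1$ and then doing a case split.
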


\begin{proof} 
We use $(iii)$ in the form stated in Remark \ref{rk1Nyman}. The non-vanishing property of $\zeta$ follows already from results of Beurling \cite{beurling}, but, since we partly need the argument in the process of showing (\ref{eqinversezeta}), we also prove this fact for the sake of completeness. Let $t\neq0$.
 We closely follow Hadamard's classical argument \cite{ingham} based on the elementary 3-4-1 trigonometric inequality, that is,
\[ P(\theta):=3 + 4 \cos(\theta) +  \cos(2\theta) \geq 0.
\]
Using the expression $\zeta(s) = \exp\left(\int_{1^{-}}^{\infty}x^{-s}\mathrm{d}\Pi(x)\right)$ and the 3-4-1 inequality, one derives, for $1< \eta $,
\[3\log|\zeta(\eta)|+4\log |\zeta(\eta+it)|+\log |\zeta(\eta+2it)|
%=\Re e\: \int_{1^{-}}^{\infty}x^{-\eta}(3+4x^{-it}+x^{-2it})\mathrm{d}\Pi(x)
= \int_{1^{-}}^{\infty}x^{-\eta}P(t\log x)\mathrm{d}\Pi(x)\geq 0,
\]
namely,
\begin{equation*}
%\label{eq3-4-1}
 \left|\zeta^{3}(\eta)  \zeta^{4}(\eta + it )  \zeta(\eta + 2it)\right| \geq 1.
\end{equation*}
This 3-4-1 inequality for $\zeta$ already implies that $1/\zeta(\eta+it)=O(1)$ uniformly on $\eta\geq2$. We assume in the sequel that $1<\eta<2$. Since $\zeta(\eta) \sim a/(\eta-1)$ as $\eta\to1^{+}$, we get 
\begin{align}
 (\eta - 1)^{3} &\leq (\eta - 1)^{3}\zeta^{3}(\eta) \left|\zeta^{4}(\eta + it)\right|\left|\zeta(\eta + 2it)\right| 
\nonumber \\
&\leq A \left|\zeta(\eta + it)\right|^{4} \left|t\right|^{\varepsilon}.
\label{zeta lower}
\end{align}  
As is well known, (\ref{zeta lower}) yields that $\zeta(1+it)$ does not vanish for $t\neq0$. Indeed, if $\zeta(1+it_0)=0$, the fact that $\zeta(s)$ and $\zeta'(s)$ have continuous extensions to $\Re e\:s=1$ would imply $(\eta-1)^{3}= O(|\zeta(\eta+it_0)|^{4})=O((\int_{1}^{\eta}|\zeta'(\lambda+it_{0})|\mathrm{d}\lambda)^{4})=O((\eta-1)^{4})$, a contradiction. The assertions about the $C^{\infty}$-extensions of $(s-1)\zeta(s)$ and $1/\zeta(s)$ must be clear, in particular $1/\zeta(1)=0$. 

Let us now establish the bound (\ref{eqinversezeta}) on the range $1\leq \sigma\leq 2$. We keep here $|t|\gg 1$.      
If $1 \leq \sigma \leq \eta<2$, we find
\[ \left|\zeta(\sigma + it ) - \zeta(\eta + it)\right| = \left|\int^{\eta}_{\sigma} \zeta'(u+it)du\right| \leq A' (\eta - 1) \left|t\right|^{\varepsilon},
\]
where we have used the bound (\ref{eqzetapolederivativesbound}) for $\zeta'$. Combining this inequality with (\ref{zeta lower}), we find
\begin{align*}
\left|\zeta(\sigma + it)\right| &\geq \left|\zeta(\eta + it)\right| - A'(\eta - 1)\left|t\right|^{\varepsilon}\\
&\geq \frac{(\eta - 1)^{3/4}}{A^{1/4}\left|t\right|^{\varepsilon/4}} - A'(\eta - 1)\left|t\right|^{\varepsilon}.
\end{align*}
Now choose $\eta = \eta(t)$ in such a way that
\[ \frac{(\eta - 1)^{3/4}}{A^{1/4}\left|t\right|^{\varepsilon/4}} = 2A'(\eta - 1)\left|t\right|^{\varepsilon},
\]
i.e.,
\[ \eta = 1 + \frac{1}{A(2A')^{4}|t|^{5\varepsilon}}=1+\frac{A''}{|t|^{5\varepsilon}},
\]
assuming $t$ large enough to ensure $\eta < 2$. Then, in this range,
\[ \left|\zeta(\sigma + it)\right| \geq A'(\eta - 1)\left|t\right|^{\varepsilon} = A'A''\left|t\right|^{-4\varepsilon}.
\]
For the range $1+A''|t|^{-5\varepsilon}\leq \sigma\leq 2$, the estimate (\ref{zeta lower}) with $\sigma$ instead of $\eta$ yields exactly the same lower bound.
\end{proof}
We now aboard the proof of Theorem \ref{thPNTNyman}.

\begin{proof}[Proof of Theorem \ref{thPNTNyman}] Upon setting $S(x)=N(e^{x})$ and $T(x)=ae^{x}$, so that
$$
G(s)=\mathcal{L}\{dS-dT;s\}=\zeta(s)-\frac{a}{s-1},
$$
 Theorem \ref{thtaub1} gives the implication $(ii)\Rightarrow  (i)$, Proposition \ref{conversetauberian} yields  
$(i)\Rightarrow (iii)$, whereas $(iii)\Rightarrow  (ii)$ follows from Remark \ref{rk1Nyman}. So, the first three conditions are equivalent and it remains to establish the equivalence between any of these statements and $(iv)$.

 $(iii)\Rightarrow  (iv)$. We now set $S_1(x) := \Pi(e^{x})$ and 
\begin{equation*}
%\label{defauxiliary}
T_1(x) := \int^{e^{x}}_{1} \frac{1- \frac{1}{y}}{\log y}\mathrm{d}y = \operatorname*{Li} (e^{x}) - \log x + A, \quad x\geq 0.
\end{equation*}
A quick calculation gives an explicit expression for $G_1(s) := \mathcal{L}\{dS_1-dT_1;s\}$, namely,
\begin{equation}
\label{defauxfunct}
G_1(s) = \log \zeta(s) - \log s + \log (s-1)= \log ((s-1)\zeta(s))- \log s,
\end{equation}
with the principal branch of the logarithm.
By Remark \ref{rk1Nyman}, Lemma \ref{leminversezeta}, and the Leibniz rule, we obtain that $G_1(1+it)\in C^{\infty}(\mathbb{R})$ and bounds $G_1^{(n)}(1+it) = O_{\varepsilon,n}(\left|t\right|^{\varepsilon})$, $|t|\gg1$. Another application of Theorem \ref{thtaub1} yields (\ref{relationPi}). 

$(iv)\Rightarrow  (ii)$.
Conversely, let  (\ref{relationPi}) hold and retain the notation $S_{1}$, $T_{1}$, and $G_1$ as above. We apply Proposition \ref{conversetauberian} to $S_1$ and $T_1$ to get that (\ref{defauxfunct}) admits a $C^{\infty}$-extension to $\Re e \: s = 1$ and all of its derivatives on that line are bounded by $O(\left|t\right|^{\varepsilon})$ for each $\varepsilon>0$. This already yields that the function $G(s)$ given by (\ref{eqzetapole}), no matter the value of the constant $a$, has also a $C^{\infty}$-extension to $\Re e \: s = 1$ except possibly at $s = 1$. Moreover, since $T_1(x) = O(e^{x}/x)$, we even get from Proposition \ref{conversetauberian} that  $G_{1}(t)=o( \log \left|t\right|)$ for $|t|\gg 1$, or, which amounts to the same, $\zeta(1+it) = O(\left|t\right|^{\varepsilon})$, for each $\varepsilon>0$. Thus, by this bound and the bounds on the derivatives of $\log \zeta(1+it)$, we have that $\zeta^{(n)}(1+it) = O(\left|t\right|^{\varepsilon})$, as can easily be deduced by induction with the aid of the Leibniz formula. 

Summarizing, we only need to show that there exists $a > 0$ for which $\zeta(s) - a/(s-1)$ has a $C^{\infty}$-extension on the whole line $\Re e \: s = 1$. 
The function $\log ((s-1)\zeta(s))$ however admits a $C^{\infty}$-extension to this line, and its value at $s=1$  coincides with that of the function $G_{1}$, as shown by the expression (\ref{defauxfunct}). Therefore, $(s-1)\zeta(s)$ also extends to $\Re e \: s \geq 1$ as a $C^{\infty}$-function, and its value at $s=1$ can be calculated as $a=\lim_{\sigma\to1^{+}}e^{G_{1}(\sigma)}=e^{G_{1}(1)}>0$, because $G_1(\sigma)$ is real-valued when $\sigma$ is real. Hence $\zeta(s) - a/(s-1)$ has also a $C^{\infty}$-extension to $\Re e\:s\geq 1$ (This follows from the general fact that $t^{-1}(f(t) - f(0))$ is $C^{k-1}$ for a $C^{k}$-function $f$.) This concludes the proof of the theorem. 
\end{proof}

\section{A Ces\`{a}ro Version of the PNT with Remainder} \label{Section Cesaro PNT with remainder}
In this last section we obtain an average version of Theorem \ref{thPNTNyman} where the remainders in (\ref{asymNNremainder}) and (\ref{relationPi}) are taken in the Ces\`{a}ro sense. The motivation of this new PNT comes from a natural replacement of $(ii)$, or equivalently $(iii)$, by a certain weaker growth requirement on $\zeta$. 

Let us introduce some function and distribution spaces. The space $\mathcal{O}_{C}(\mathbb{R})$ consists of all $g\in C^{\infty}(\mathbb{R})$ such that there is some $\beta\in\mathbb{R}$ with $g^{(n)}(t)=O_{n}(|t|^{\beta})$, for each $n\in\mathbb{N}$. This space is well-known in distribution theory. When topologized in a canonical way, its dual space $\mathcal{O}_{C}' (\mathbb{R})$ corresponds to the space of convolutors of the tempered distributions \cite{estrada-kanwal,p-s-v}. Another well known space is that of multipliers of $\mathcal{S}'(\mathbb{R})$, denoted as $\mathcal{O}_{M}(\mathbb{R})$ and consisting of all $g\in C^{\infty}(\mathbb{R})$ such that for each $n\in\mathbb{R}$ there is $\beta_n\in\mathbb{R}$ such that $g^{(n)}(t)=O_{n}(|t|^{\beta_n})$. Of course, we have the inclusion relation $\mathcal{O}_{C}(\mathbb{R})\subsetneq \mathcal{O}_{M}(\mathbb{R})$.

Observe that condition $(ii)$ from Theorem \ref{thPNTNyman} precisely tells that for some $a>0$ the analytic function $G(s)=\zeta(s)-a/(s-1)$ has boundary values on $\Re e\: s=1$ in the space $\mathcal{O}_{C}(\mathbb{R})$, that is, $G(1+it)\in\mathcal{O}_{C}(\mathbb{R})$. We now weaken this membership relation to $G(1+it)\in\mathcal{O}_{M}(\mathbb{R})$. To investigate the connection between the latter condition and the asymptotic behavior of $N$ and $\Pi$, we need to use asymptotics in the Ces\`{a}ro sense. For a locally integrable function $E$, with support in $[0,\infty)$, and $\alpha\in\mathbb{R}$, we write
\begin{equation}
\label{eqcesaro}
E(x)=O\left(\frac{x}{\log^{\alpha} x}\right) \quad (\mathrm{C}) \quad (x\to\infty)
\end{equation}
if there is some (possibly large) $m\in\mathbb{N}$ such that the following average growth estimate holds: 
\begin{equation}
\label{ibpneq6}
\int_{0}^{x} \frac{E(u)}{u}\left(1-\frac{u}{x}\right)^{m}\mathrm{d}u=O\left(\frac{x}{\log^{\alpha}x}\right).
\end{equation}
The order $m$ of the Ces\`{a}ro-Riesz mean to be taken in (\ref{ibpneq6}) is totally irrelevant for our arguments below and we therefore choose to omit it from the notation in (\ref{eqcesaro}).
The meaning of an expression $f(x)=g(x)+O\left(x/\log^{\alpha} x\right)$ in the Ces\`{a}ro sense should be clear. We remark that Ces\`{a}ro asymptotics can also be defined for distributions, see \cite{estrada-kanwal,p-s-v}. The notion of Ces\`{a}ro summability of integrals is well-known, see e.g. \cite{estrada-kanwal}.

We have the following PNT with remainder in the Ces\`{a}ro sense:
\begin{theorem} \label{thPNTRCesaro}
 For a generalized number system the following four statements are equivalent:
\begin{itemize}
\item[$(i)$] For some $a > 0$, the generalized integer distribution function $N$ satisfies
\begin{equation} 
\label{asymNNremainder2}
 N(x) = ax + O\left(\frac{x}{\log^{n}x} \right) \quad (\mathrm{C}), \quad \text{for all } n \in \mathbb{N}.
\end{equation}
\item[$(ii)$] For some $a>0$, the function 
\begin{equation}
\label{eqzetapole2}
G(s)= \zeta(s)-\frac{a}{s-1}
\end{equation}
has a $C^{\infty}$-extension to $\Re e\:s\geq 1$ and $G(1+it)\in\mathcal{O}_{M}(\mathbb{R})$.
\item[$(iii)$]  For some $a>0$, there is a positive sequence $\{\beta_n\}_{n=0}^{\infty}$ such that the function (\ref{eqzetapole2}) satisfies
\begin{equation}
\label{eq2zetapolederivativesbound2}
G^{(n)}(s)=O((1+|s|)^{\beta_n}),\quad \text{for all } n \in \mathbb{N},
\end{equation}        
on $\Re e\:s>1$ with global $O_{n}$-constants.                                                       
\item [$(iv)$] The  Riemann prime distribution function $\Pi$ satisfies
\begin{equation}
\label{relationPi2} 
 \Pi(x) = \operatorname*{Li}(x) + O\left(\frac{x}{\log^{n}x} \right) \quad (\mathrm{C}), \quad \text{for all } n \in \mathbb{N}.
\end{equation}
\end{itemize} 
\end{theorem}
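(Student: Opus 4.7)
The proof will parallel that of Theorem \ref{thPNTNyman}: first establish $(ii)\Leftrightarrow(iii)$, then $(i)\Leftrightarrow(iii)$ via a Ces\`{a}ro adaptation of Theorem \ref{thtaub1} and Proposition \ref{conversetauberian}, and finally $(iii)\Leftrightarrow(iv)$ through the same logarithmic reduction used before. For $(iii)\Rightarrow(ii)$, a local $L^{\infty}$ weak-$*$ compactness argument (in the spirit of Remark \ref{rk1Nyman}) yields $C^{\infty}$-boundary values satisfying $G^{(n)}(1+it)=O(|t|^{\beta_n})$ for each $n$, i.e., $G(1+it)\in\mathcal{O}_{M}(\mathbb{R}_{t})$. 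Conversely, $G$ is smooth and bounded on $\Re s\geq 2$, and on the strip $1<\Re s<2$ each derivative $G^{(n)}$ inherits polynomial bounds from the boundary line by a Phragm\'{e}n--Lindel\"{o}f / Cauchy-integral argument applied on small vertical rectangles.

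The core of the theorem is $(i)\Leftrightarrow(iii)$. Write $S(x)=N(e^{x})$, $T(x)=ae^{x}$, $\Delta(x)=e^{-x}(S(x)-T(x))$ as in the proof of Theorem \ref{thPNTNyman}. After the substitution $u=e^{v}$ in (\ref{ibpneq6}) and division by $e^{y}$, condition $(i)$ rewrites as the convolutional estimate $(\Delta\ast\kappa_{m})(y)=O(y^{-n})$ with kernel $\kappa_{m}(\tau)=e^{-\tau}(1-e^{-\tau})^{m}\mathbf{1}_{\tau\geq 0}$. A direct beta-integral computation gives
\begin{equation*}
\widehat{\kappa_{m}}(t)=\frac{m!}{\prod_{k=1}^{m+1}(k+it)},
\end{equation*}
so that $\widehat{\kappa_{m}}^{(j)}(t)=O(|t|^{-(m+1+j)})$ for each $j$. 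For any prescribed $n$, one then chooses $m=m(n)$ large enough that the derivatives of the product $\widehat{\Delta}(t)\widehat{\kappa_{m}}(t)=G(1+it)\widehat{\kappa_{m}}(t)/(1+it)$ satisfy bounds with exponents meeting the $\beta_{k}/k\to 0$ hypothesis required at the relevant truncation level in the proof of Theorem \ref{thtaub1}; applying that theorem to the convolved pair yields the Ces\`{a}ro asymptotic $(i)$. For the converse direction one mimics the proof of Proposition \ref{conversetauberian}: the pointwise bound on $S(x)-T(x)$ used there is replaced by the Ces\`{a}ro bound, and the tail integral $\int_{X}^{\infty}e^{-sx}x^{n}(S-T)(x)\,dx$ in the analog of (\ref{eqformulaG}) is estimated by $m$-fold integration by parts, transferring the powers of $x$ onto the Ces\`{a}ro mean.

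Finally, $(iii)\Leftrightarrow(iv)$ goes exactly as in Theorem \ref{thPNTNyman}. The 3-4-1 inequality together with $(iii)$ gives an analog of Lemma \ref{leminversezeta}: $\zeta$ has no zeros on $\Re s=1$ and $1/\zeta$ satisfies polynomial bounds there. Leibniz then shows $G_{1}(s)=\log((s-1)\zeta(s))-\log s$ has $C^{\infty}$-extension to $\Re s\geq 1$ with $G_{1}^{(n)}(1+it)\in\mathcal{O}_{M}(\mathbb{R}_{t})$, and the Ces\`{a}ro Tauberian pair applied to $(S_{1},T_{1})=(\Pi(e^{x}),\operatorname*{Li}(e^{x})-x+A)$ equates $(iii)$ and $(iv)$. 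The main obstacle is the Ces\`{a}ro version of Theorem \ref{thtaub1}: the hypothesis $\beta_{n}/n\to 0$ no longer holds, and it must be effectively restored by the polynomial decay of $\widehat{\kappa_{m}}$. The delicate bookkeeping lies in quantifying the dependence of $m$ on $n$ and on the growth sequence $\{\beta_{k}\}$, and in verifying that the hypotheses of Theorem \ref{thtaub1} are met by the convolved transform at the appropriate truncation level for each target exponent $n$.
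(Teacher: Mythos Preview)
Your route is genuinely different from the paper's. The paper does not build a Ces\`{a}ro analogue of Theorem~\ref{thtaub1} via convolution with $\kappa_{m}$; instead it proves a self-contained equivalence (Theorem~\ref{lemsufcondcesaro}): for $E$ supported in $[1,\infty)$ with $E(x)=O(x)$ $(\mathrm{C})$, one has $E(x)=O(x/\log^{\alpha}x)$ $(\mathrm{C})$ for every $\alpha$ if and only if $F(1+it)\in\mathcal{O}_{M}$. The argument is distribution-theoretic: with $R(u)=e^{-u}E(e^{u})$ one observes that the Ces\`{a}ro estimates are equivalent, via the structural theorems for quasiasymptotic boundedness, to $R\in\mathcal{O}'_{C}(\mathbb{R})$, and then uses that the Fourier transform maps $\mathcal{O}'_{C}$ onto $\mathcal{O}_{M}$. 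This gives both directions of $(i)\Leftrightarrow(ii)$ in one stroke, with no kernels and no truncation bookkeeping. Your $\kappa_{m}$ approach is a valid and more elementary alternative; it avoids the quasiasymptotic machinery, but the cost is precisely what you flag: you cannot apply Theorem~\ref{thtaub1} as a black box (the convolved exponents $\tilde\beta_{n}$ still need not satisfy $\tilde\beta_{n}/n\to0$ for fixed $m$), so you must extract the single-level estimate $\Delta_{m}(h)=O(h^{-n/(\tilde\beta_{n}+1)})$ from its proof and then choose $m=m(n)$ large. You also need to check that the Tauberian hypotheses survive convolution, i.e.\ that $S_{m}(x)=\int_{0}^{x}S(v)(1-e^{v-x})^{m}\,dv$ is non-decreasing and $T_{m}'(x)\le Ae^{x}$; this is true but should be said.

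There is one genuine omission in your outline of $(iv)\Rightarrow(ii)$. Your Ces\`{a}ro Abelian step yields $G_{1}(1+it)\in\mathcal{O}_{M}$, but to recover $G(1+it)\in\mathcal{O}_{M}$ you must know that $\zeta(1+it)=\dfrac{1+it}{it}\,e^{G_{1}(1+it)}$ has at most polynomial growth, i.e.\ that $\Re e\,G_{1}(1+it)=O(\log|t|)$. Mere $\mathcal{O}_{M}$-membership of $G_{1}$ does not give a logarithmic bound on $G_{1}$ itself. The paper handles this through the sharpened estimate~(\ref{eqextra2}) of Theorem~\ref{lemsufcondcesaro}, valid under the total-variation hypothesis~(\ref{variationmeasurebd}); verifying~(\ref{variationmeasurebd}) for $E=\Pi-\operatorname*{Li}$ in turn requires the \emph{pointwise} bound $\Pi(x)=O(x/\log x)$, which the paper obtains from the Ces\`{a}ro hypothesis~(\ref{relationPi2}) by a separate Tauberian argument (monotonicity of $\Pi$). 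Your sketch should incorporate this step; without it the passage from $G_{1}\in\mathcal{O}_{M}$ back to polynomial bounds on $\zeta$ and its derivatives is not justified.
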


We indicate that, as in Remark \ref{rk1Nyman}, the bounds (\ref{eq2zetapolederivativesbound2}) also imply that $G$ has a $C^{\infty}$-extension to $\Re e\:s\geq 1$ and that (\ref{eq2zetapolederivativesbound2}) remains valid on $\Re e\:s\geq 1$. Naturally, the PNT (\ref{relationPi2}) delivered by Theorem \ref{thPNTRCesaro} is much weaker than (\ref{relationPi}).  Before discussing the proof of Theorem \ref{thPNTRCesaro}, we give a family of examples of generalized number systems which satisfy condition $(ii)$ from Theorem \ref{thPNTRCesaro} but not those from Theorem \ref{thPNTNyman}.
\begin{example}
\label{exNyman4} The family of continuous generalized number systems whose Riemann prime distribution functions are given by
\begin{equation*}
 \Pi_{\alpha}(x) = \int^{x}_{1} \frac{1-\cos(\log^{\alpha} u)}{\log u } \mathrm{d}u\:, \quad \mbox{for }\alpha>1, 
\end{equation*}
was studied in \cite{d-s-v}. It follows from \cite[Thm.~3.1]{d-s-v} that there are constants $a_{\alpha}$ such that their zeta functions have the property that $G_{\alpha}(s)=\zeta_{\alpha}(s)-a_{\alpha}/(s-1)$ are entire. In this case, \cite[Thm.~3.1]{d-s-v} also implies that $G_{\alpha}(1+it)\in\mathcal{O}_{M}(\mathbb{R})$, but it does not belong to $\mathcal{O}_{C}(\mathbb{R})$. 
\end{example}

We need some auxiliary results in order to establish Theorem \ref{thPNTRCesaro}. The next theorem is of Tauberian character. Part of its proof is essentially the same as that of \cite[Lemma~2.1]{d-s-v}, but we include it for the sake of completeness.

\begin{theorem} \label{lemsufcondcesaro} Let $E$ be of locally bounded variation  with support on $[1,\infty)$ and suppose that $E(x)=O(x)\ $ $(\mathrm{C})$. Set
$$
F(s)=\int_{1^{-}}^{\infty}x^{-s}\mathrm{d}E(x) \quad (\mathrm{C}), \quad \Re e\:s>1.
$$
Then, $E$ satisfies (\ref{eqcesaro}) for every $\alpha>0$ if and only if $F$ has a $C^{\infty}$-extension to $\Re e\:s\geq1$ that satisfies $F(1+it)\in \mathcal{O}_{M}(\mathbb{R}_{t})$. If this is the case, then there is a sequence $\{\beta_n\}_{n=0}^{\infty}$ such that for each $n$
\begin{equation}
\label{eqextra1}
F^{(n)}(s)=O((1+|s|)^{\beta_n}),\quad \mbox{on } \Re e\:s\geq1.
\end{equation}  
Furthermore, assume additionally that
\begin{equation}
\label{variationmeasurebd}
V(E,[1,x])=\int_{1^{-}}^{x} |\mathrm{d}E|(u)=O\left(\frac{x}{\log x}\right),
\end{equation}
where $|\mathrm{d}E|$ stands for the total variation measure of $\mathrm{d}E$. Then, 
\begin{equation}
\label{eqextra2}
F^{}(s)=O(\log(1+|\Im m\:s|)),\quad \mbox{on } \Re e\:s\geq1.
\end{equation}  
\end{theorem}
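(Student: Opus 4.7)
Plan: I would mirror the setup of Theorem \ref{thtaub1} and Proposition \ref{conversetauberian} after reducing to a Laplace transform. The substitution $x = e^v$ with $\tilde E(v) := E(e^v)$ converts $F$ into the Cesàro-summable Laplace transform $\int_{0^{-}}^\infty e^{-sv}\,d\tilde E(v)$, convergent for $\Re e\:s > 1$, and a routine change of variables shows that the multiplicative Cesàro-Riesz means $\int_0^x (E(u)/u)(1-u/x)^m\,du$ are comparable to additive means $\int_0^y \tilde E(v)(1-v/y)^{m'}\,dv$ at $y = \log x$ (the factor $1 - e^{v-y}$ is majorized by a constant multiple of $1-v/y$ on $[y/2,y]$, and the contribution from $[0,y/2]$ is exponentially negligible after cancellation with $e^y$). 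Hence (\ref{eqcesaro}) for every $\alpha$ is equivalent to $\tilde E(v) = O(e^v v^{-\alpha})$ (C) for every $\alpha$.

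Abelian direction ($\Rightarrow$). Adapt Proposition \ref{conversetauberian}. Starting from
$$
F^{(n)}(s) = (-1)^n \int_{1^{-}}^\infty (\log x)^n x^{-s}\,dE(x),
$$
split at $X \gg 1$. On $[1,X]$, iterate integration by parts $m+1$ times to transfer $dE$ onto the smooth kernel $(\log x)^n x^{-s}$; what remains is an integral against the Cesàro-Riesz mean of $E(u)/u$, which is $O(u)$ by hypothesis, while the differentiated kernel contributes a polynomial factor in $|s|$, giving a bound $O(|s|^{m+1} X^{n+c_m})$ for some constant $c_m$. On $[X,\infty)$, using instead the Cesàro decay $\int_1^u (E/\cdot)(1-\cdot/u)^m = O(u/\log^\gamma u)$ produces a tail of size $|s|^{O(1)} X^{n+1-\gamma}$. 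Balancing $X = |t|^{1/\gamma}$ and letting $\gamma \to \infty$ gives $F^{(n)}(1+it) = O(|t|^\varepsilon)$ for every $\varepsilon > 0$, hence $F(1+it) \in \mathcal{O}_M(\mathbb{R}_t)$; the same estimate run uniformly for $1 \leq \sigma \leq 2$ yields (\ref{eqextra1}).

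Tauberian direction ($\Leftarrow$). Since $E$ is not monotone, the local comparison trick of Theorem \ref{thtaub1} is unavailable; however, Cesàro-type asymptotics are already smoothed statements and can be attacked directly via Fourier inversion. Set $\Delta(v) := e^{-v}\tilde E(v)$, so $\hat\Delta(t) = F(1+it)/(1+it) \in \mathcal{O}_M(\mathbb{R}_t)$ as a distributional boundary value. Fix $\alpha > 0$, pick a non-negative $\phi \in \mathcal{D}(0,1)$ with $\int\phi = 1$, and set $\varepsilon = 1/y$. Then
$$
M_\varepsilon(y) := \frac{1}{\varepsilon} \int_0^\varepsilon \Delta(y+w)\,\phi\!\left(\frac{w}{\varepsilon}\right) dw = \frac{1}{2\pi} \int_{-\infty}^{\infty} \hat\Delta(t)\,\hat\phi(-\varepsilon t)\, e^{iyt}\, dt.
$$
Integrating by parts $n$ times in $t$ and using $\hat\phi \in \mathcal{S}$ to absorb the polynomial growth of the derivatives $\hat\Delta^{(j)}$ yields $M_\varepsilon(y) = O(y^{-\alpha})$, once $n$ is chosen larger than $\alpha$ plus the growth exponent that $\mathcal{O}_M$ assigns to $\hat\Delta^{(n)}$. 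Iterating these smoothed means (equivalently, integrating in $y$) recovers a genuine Cesàro-Riesz average of $\Delta$, yielding (\ref{eqcesaro}) for every $\alpha$ after undoing the substitution.

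Logarithmic improvement and main obstacle. For (\ref{eqextra2}), condition (\ref{variationmeasurebd}) yields $|E(x)| \leq V(E,[1,x]) = O(x/\log x)$ pointwise; inserting this sharper bound in the $n=0$ Abelian step replaces $\int_1^X |E(u)|\,du = O(X^2)$ by $O(X^2/\log X)$, and choosing $X = |t|^{1/(\gamma-1)}$ with $\gamma\to\infty$ reproduces $F(s) = O(\log(1+|\Im m\:s|))$ exactly as in the final part of Proposition \ref{conversetauberian}. The main obstacle is the Tauberian step: one must verify that the smoothed mean $M_\varepsilon(y)$ indeed dominates a true Cesàro-Riesz average of $\Delta$ of a prescribed order $m$, and carefully match the polynomial growth of each $\hat\Delta^{(j)}$ to the chosen values of $n$ and $\alpha$ so as to obtain the claimed decay for \emph{every} $\alpha > 0$.
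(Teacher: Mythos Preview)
Your Tauberian direction contains a genuine gap. The integration-by-parts scheme you borrow from Theorem~\ref{thtaub1} worked there only because of the hypothesis $\beta_n/n \to 0$; that is precisely what allows one to find $n$ with $n - \beta_n$ arbitrarily large. Membership in $\mathcal{O}_M$ imposes no such constraint: the exponents $\beta_n$ attached to $\hat\Delta^{(n)}$ may grow like $n^2$, say, and then your condition ``$n$ larger than $\alpha$ plus the growth exponent that $\mathcal{O}_M$ assigns to $\hat\Delta^{(n)}$'' has no solution for any $\alpha>0$. With the choice $\varepsilon = 1/y$ your bound on $M_\varepsilon(y)$ collapses to $O(y^{1+\beta_n - n})$, which need not decay at all. (Fixing $\varepsilon$ instead would give $M_\varepsilon(y)=O(y^{-n})$ for each $n$, but then $M_\varepsilon$ is just $\Delta$ convolved with one fixed test function, and the step ``iterating these smoothed means recovers a genuine Ces\`aro--Riesz average'' is precisely the non-trivial structural content you have not supplied.)

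The paper's argument is organized very differently and sidesteps this difficulty entirely. It passes to $R(u)=e^{-u}E(e^{u})$ and observes that $\hat R(t)=F(1+it)/(1+it)$; since the Fourier transform is a bijection between $\mathcal{O}'_C(\mathbb{R})$ and $\mathcal{O}_M(\mathbb{R})$ (a classical result of Schwartz), the hypothesis $F(1+it)\in\mathcal{O}_M$ is \emph{equivalent} to $R\in\mathcal{O}'_C$, i.e.\ to $\int R(u+h)\phi(u)\,\mathrm{d}u = O(h^{-\alpha})$ for every $\alpha>0$ and every $\phi\in\mathcal{D}(\mathbb{R})$. After a change of variables this is the quasiasymptotic statement $E(\lambda x)/(\lambda x)=O(\log^{-\alpha}\lambda)$ in $\mathcal{D}'(0,\infty)$, and the passage from this to the Ces\`aro--Riesz bound (\ref{eqcesaro}) is then obtained by invoking the structural theorems for quasiasymptotic boundedness \cite{p-s-v,vindas,vindas3}. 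In other words, both hard steps in your Tauberian direction are absorbed into cited distribution-theoretic machinery; your hands-on approach does not reproduce them. Your Abelian direction and the treatment of (\ref{eqextra2}) are closer to the paper's (which also proceeds by splitting at $e^X$ and integrating by parts), though for (\ref{eqextra2}) the paper bounds the short-range integral directly via $\int_{1^-}^{e^X} x^{-\sigma}|\mathrm{d}E|(x)=O(\log X)$ using (\ref{variationmeasurebd}), rather than through the pointwise bound $|E(x)|=O(x/\log x)$.
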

\begin{proof} Note that the Ces\`{a}ro growth assumption implies that $F(s)$ is Ces\`{a}ro summable for $\Re e\:s>1$ and therefore analytic there. Let $F_{1}(s)=F(s)/s$ and $R(u)=e^{-u}E(e^{u})$. It is clear that $F_{1}(s)$ has a $C^{\infty}$-extension to $\Re e\:s=1$ that satisfies $F(1+it)\in \mathcal{O}_{M}(\mathbb{R})$ if and only if $F_1$ has the same property. The latter property holds if and only if $R\in\mathcal{O}'_{C}(\mathbb{R})$. Indeed, since $R\in\mathcal{S}'(\mathbb{R})$ and $F_1(s+1)=\mathcal{L}\{R;s\}$, we obtain that $\hat{R}(t)=F_1(1+it)$, whence our claim follows because the spaces $\mathcal{O}'_{C}(\mathbb{R})$ and $\mathcal{O}_{M}(\mathbb{R})$ are in one-to-one correspondence via the Fourier transform \cite{estrada-kanwal}. 

Now, by definition of the convolutor space, $R\in\mathcal{O}'_{C}(\mathbb{R})$ if and only if $\int_{-\infty}^{\infty} R(u+h)\phi(u)\mathrm{d} u = O(h^{-\alpha})$, for each $\alpha>0$ and $\phi\in\mathcal{D}(\mathbb{R})$ \cite{p-s-v}. Writing $h = \log \lambda$ and $\phi(x) = e^{x}\varphi(e^{x})$, we obtain that $R\in\mathcal{O}'_{C}(\mathbb{R})$ if and only if $E(x)/x$ has the quasiasymptotic behavior \cite{estrada-kanwal,p-s-v}
 \begin{equation} 
 \label{eqquasi1} \frac{E( \lambda x)}{\lambda x} = O\left(\frac{1}{\log^{\alpha}\lambda}\right), \quad \lambda \rightarrow \infty\:, \text{ in } \mathcal{D}(0,\infty)\: ,
\end{equation}
which explicitly means that 
\[ \int^{\infty}_{1} \frac{E( \lambda x)}{\lambda x}  \varphi(x)\mathrm{d}x = O\left(\frac{1}{\log^{\alpha}\lambda}\right), \quad \lambda \rightarrow \infty,
\]
for every test function $\varphi \in \mathcal{D}(0,\infty)$. Using \cite[Thm.~2.37, p.~154]{p-s-v}, we obtain that the quasiasymptotic behavior (\ref{eqquasi1}) in the space $\mathcal{D}(0,\infty)$ is equivalent to the same quasiasymptotic behavior in the space $\mathcal{D}(\mathbb{R})$, and, because of the structural theorem for quasiasymptotic boundedness \cite[Thm.~2.42, p.~163]{p-s-v} (see also \cite{vindas,vindas3}), we obtain that  $R\in\mathcal{O}'_{C}(\mathbb{R})$ is equivalent to the Ces\`{a}ro behavior (\ref{eqcesaro}) for every $\alpha$.

Note that we have $E(x) \log^{n} x=O(x/\log^{\alpha} x)$ $(\mathrm{C})$ for every $\alpha>0$ as well. So the bounds (\ref{eqextra1}) can be obtained from these Ces\`{a}ro asymptotic estimates by integration by parts. The bound (\ref{eqextra2}) under the assumption (\ref{variationmeasurebd}) can be shown via a similar argument to the one used in the proof of Proposition \ref{conversetauberian}. It is enough to show the bound for $\sigma=\Re e\: s>1$. Consider the splitting
$$
F(s)=\int_{1^{-}}^{e^{X}} x^{-s}\mathrm{d}E(x)+ \int_{e^X}^{\infty} x^{-s}\mathrm{d}E(x),
$$
with $X\gg1$. We can actually assume that $1<\sigma<2$ and $|t|\gg1 $ because otherwise $F$ is already bounded in view of (\ref{variationmeasurebd}). The first term in this formula is clearly $O(\log X)$ because of (\ref{variationmeasurebd}). We handle the second term via integration by parts. Let $E_{m}$ be an $m$-primitive of $E(x)/x$ such that $E_{m}(x)=O(x^{m}/\log ^{2}x)$. The absolute value of the term in question is then
$$
\leq |s|\cdots |s+m| \left(C+ \left|\int_{e^{X}}^{\infty}\frac{E_{m}(x)}{x^{s+m}}\mathrm{d}x\right|\right)\leq C_{m} |t|^{m+1}  \int_{e^{X}}^{\infty}\frac{\mathrm{d}x}{x\log^{2} x}=C_{m}\frac{\ |t|^{m+1}}{X},
$$
and we obtain $F(s)=O(\log |t|)$ by taking $X=|t|^{m+1}$
\end{proof}

With the same technique as the one employed in Lemma \ref{leminversezeta}, one shows the following bound on the inverse of $\zeta$:

\begin{lemma} \label{leminversezeta2} Suppose that condition $(iii)$ from Theorem \ref{thPNTRCesaro} is satisfied. Then, $(s-1)\zeta(s)$ has no zeros on $\Re e\:s\geq 1$ and, in particular, $1/\zeta(s)$ has a $C^{\infty}$-extension to $\Re e\:s\geq 1$ as well. Furthermore, there is some $\beta > 0$ such that 
$$
\frac{1}{\zeta(s)} = O((1+|s|)^{\beta}), \quad \mbox{on } \Re e\:s\geq 1.
$$
\end{lemma}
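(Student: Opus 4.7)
The plan is to repeat the structure of the proof of Lemma \ref{leminversezeta} almost verbatim, with the qualitative change that the $O(|t|^{\varepsilon})$ bounds there are replaced by the stronger polynomial bounds $O((1+|s|)^{\beta_n})$ permitted by condition $(iii)$. The central tool is again Hadamard's 3-4-1 inequality
\[
\left|\zeta^{3}(\eta)\,\zeta^{4}(\eta+it)\,\zeta(\eta+2it)\right|\geq 1,\qquad \eta>1,\ t\in\mathbb{R},
\]
which follows, as before, from $\zeta(s)=\exp\bigl(\int_{1^{-}}^{\infty}x^{-s}\mathrm{d}\Pi(x)\bigr)$ and $3+4\cos\theta+\cos(2\theta)\geq 0$. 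From $(iii)$ and the remark after Theorem \ref{thPNTRCesaro}, one has, for $1\leq \sigma\leq 2$ and $|t|\geq 1$, the polynomial upper bounds $\zeta(\sigma+it)=O((1+|t|)^{\beta_{0}})$ and $\zeta'(\sigma+it)=O((1+|t|)^{\beta_{1}})$, obtained by writing $\zeta=G+a/(s-1)$ and absorbing the $a/(s-1)$ contribution.

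Since $\zeta(\eta)\sim a/(\eta-1)$ as $\eta\to 1^{+}$, the 3-4-1 inequality and the upper bound on $\zeta(\eta+2it)$ give
\[
(\eta-1)^{3}\leq C\,|\zeta(\eta+it)|^{4}\,(1+|t|)^{\beta_{0}},\qquad 1<\eta\leq 2,\ |t|\geq 1,
\]
and consequently
\[
|\zeta(\eta+it)|\geq C'\,(\eta-1)^{3/4}(1+|t|)^{-\beta_{0}/4}.
\]
Non-vanishing of $\zeta$ on $\Re e\:s=1$ for $t\neq 0$ follows as in Lemma \ref{leminversezeta}: if $\zeta(1+it_{0})=0$, the $C^{\infty}$-extension of $\zeta$ off $s=1$ would yield $|\zeta(\eta+it_{0})|=O(\eta-1)$, contradicting the $(\eta-1)^{3/4}$ lower bound. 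Together with the fact that $(s-1)\zeta(s)\to a>0$ at $s=1$, this shows that $(s-1)\zeta(s)$ has no zeros on $\Re e\:s\geq 1$, and hence $1/\zeta$ has the claimed $C^{\infty}$-extension (with $1/\zeta(1)=0$).

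For the quantitative bound, I would pass from $\eta$ to an arbitrary $1\leq \sigma\leq \eta$ via
\[
|\zeta(\sigma+it)-\zeta(\eta+it)|\leq (\eta-1)\sup_{u\in[\sigma,\eta]}|\zeta'(u+it)|\leq C(\eta-1)(1+|t|)^{\beta_{1}},
\]
and then choose $\eta=1+c(1+|t|)^{-\gamma}$ with $\gamma=\beta_{0}+4\beta_{1}$ so that the 3-4-1 lower bound dominates twice the difference; this produces $|\zeta(\sigma+it)|\geq C''(1+|t|)^{-(\beta_{0}+3\beta_{1})}$ on $1\leq \sigma\leq 1+c(1+|t|)^{-\gamma}$. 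On the complementary range $1+c(1+|t|)^{-\gamma}\leq \sigma\leq 2$ the 3-4-1 inequality with $\eta=\sigma$ gives a lower bound of the same order, and for $\sigma\geq 2$ the same inequality (with $\eta=\sigma$) together with the polynomial upper bound on $\zeta$ yields $|\zeta(\sigma+it)|\geq C/(1+|s|)^{\beta_{0}}$. For $|t|$ bounded and $s$ bounded away from $1$ the estimate is immediate from continuity and non-vanishing; near $s=1$, $(s-1)\zeta(s)\to a>0$ makes $1/\zeta$ bounded. Combining these cases, and noting $1+|s|\asymp 1+|t|$ in the strip, gives $1/\zeta(s)=O((1+|s|)^{\beta})$ with $\beta=\beta_{0}+3\beta_{1}$, as required.

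The only real obstacle is the bookkeeping of the exponent $\beta$ and splitting into the correct subregions of $\{\Re e\:s\geq 1\}$; no new analytic idea beyond those already used for Lemma \ref{leminversezeta} is needed, because condition $(iii)$ is essentially tailor-made so that Hadamard's classical argument goes through with polynomial loss.
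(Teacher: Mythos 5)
Your proposal is correct and follows essentially the same route the paper indicates (the paper simply says the lemma is shown ``with the same technique as the one employed in Lemma~\ref{leminversezeta}''), namely Hadamard's 3-4-1 inequality plus the mean value theorem to pass from $\eta$ to $\sigma$, now with polynomial rather than $|t|^{\varepsilon}$ losses. Your exponent bookkeeping ($\gamma=\beta_{0}+4\beta_{1}$, $\beta=\beta_{0}+3\beta_{1}$) checks out.
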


Let us point out that the Ces\`{a}ro asymptotics  (\ref{asymNNremainder2}) always leads to $N(x)\sim ax$, while (\ref{relationPi2}) leads to $\Pi(x)\sim x/\log x$, which can be shown by standard Tauberian arguments. This comment allows us the application of Theorem \ref{lemsufcondcesaro} to the functions $E_{1}(x)=N(x)-ax$ and $E_2(x)=\Pi(x)-\operatorname*{Li} (x)$.

The rest of the proof goes exactly along the same lines as that of Theorem \ref{thPNTNyman} (using Theorem \ref{lemsufcondcesaro} instead of Theorem \ref{thtaub1} and Proposition \ref{conversetauberian}), and we thus omit the repetition of details. So, Theorem \ref{thPNTRCesaro} has been established.

\end{document}